\newtheorem{thm}{Theorem}
\newtheorem{cor}{Corollary}
\newtheorem{lemma}[thm]{Lemma}
\newtheorem{prop}{Proposition}
\newtheorem{remark}{Remark}
\DeclareMathOperator{\F}{\mathbb{F}}
\DeclareMathOperator{\Tr}{Tr}
\begin{document}

	\baselineskip=16.3pt
\parskip=14pt
\begin{center}
	\section*{Explicit Maximal and Minimal Curves of Artin-Schreier Type from Quadratic Forms}
	{\large 
		
		Daniele Bartoli\footnote{Dipartimento di Matematica e Informatica, Universit\`a degli Studi di
			Perugia,\\ Via Vanvitelli 1, Perugia, 06123   Italy.  e-mail: daniele.bartoli@unipg.it, Research partially supported by Ministry for Education, University and Research of Italy (MIUR) (Project PRIN 2012 \textit{Geometrie di Galois e strutture di incidenza}-Prot. N.2012XZE22K$\textunderscore$005) and by the Italian National Group for Algebraic and Geometric Structures and their Applications (GNSAGA-INdAM).} , Luciane Quoos \footnote{Instituto de Matem\'atica, Universidade Federal do Rio de Janeiro, 
			Av. Athos da Silveira Ramos 149, Centro de Tecnologia - Bloco C, Ilha do
			Fund\~ao, Rio de Janeiro, RJ 21941-909.  Brazil. e-mail: luciane@im.ufrj.br.} Z\"{u}lf\"{u}kar Sayg{\i}\footnote{Department of Mathematics,
			TOBB University of Economics and Technology, e-mail: zsaygi@etu.edu.tr.}, Emrah Sercan Y\i lmaz \footnote{Department of Mathematics and Statistics, University College Dublin, e-mail:~emrahsercanyilmaz@gmail.com, Research supported by Science Foundation Ireland Grant 13/IA/1914.}
		 }
\end{center}

\subsection*{Abstract}
In this work we present explicit examples of maximal and minimal curves over finite fields in odd characteristic. The curves are of Artin-Schreier type and the construction is closely related to quadratic forms from $\mathbb{F}_{q^n}$ to $\mathbb{F}_q$.

\section{Introduction}\label{sec.intro}

In the interaction between algebraic curves over finite fields and applications in coding theory, cryptography, quasi-random numbers and related areas it is important to know the number of rational points of the curve (see, for example, \cite{Cem,NX1,NX2,S,TV}). Artin-Schreier curves over finite fields is a central theme and many of the known constructions of maximal or minimal curves are closely related to quadratic forms. 
Recently, some characterizations and classification results were obtained in the literature.
Let $\F_{q}$ denote the finite field with $q$ elements. For $q=2^t$ a full classification of quadratic forms from $\mathbb{F}_{q^k}$ to $\mathbb{F}_{q}$ of codimension 2 is provided in the following cases: all the coefficients are from $\F_2$ or at least three are in $\F_4$; as an application maximal and minimal curves are obtained, see \cite{F1,F2,OS1,OS2,OS3}. Latter on some results on quadratic functions and maximal Artin-Schreier curves over finite fields having odd characteristic are presented in \cite{AM2015} and \cite{W}. In \cite{OS4} by using some techniques developed in \cite{COJPAA} a Conjecture presented in \cite{W} is proved and explicit classes of maximal and minimal Artin-Schreier type curves over finite fields having odd characteristics are presented. 

Throughout this paper by a curve we mean a smooth geometrically irreducible and projective curve over a finite field of odd characteristic. 
For a positive integers $m$ consider the $\F_q$-linearized polynomial of degree $q^m$
\begin{equation*}
S(x)=s_0 x + s_1 x^{q} + \cdots + s_m x^{q^m} \in \F_{q^n}[x].
\end{equation*}

In this work we consider the Artin-Schreier type curves $\mathcal{X}$ defined as
\begin{equation}
 \label{chi}
\mathcal{X}:\quad y^q - y= x S(x)=\sum_{i=0}^{h} s_i x^{q^i+1}.
\end{equation}
First note that such curves have a unique singular point at infinity (which is $\mathbb{F}_{q^n}$-rational). Also, there is a unique place centered on it; see for instance \cite[Proposition 3.7.10]{S}. This means that the number of $\mathbb{F}_{q^n}$-rational points of $\mathcal{X}$ equals the number of degree one places in the corresponding function field.
These curves are related with the quadratic forms (see, Section \ref{sec.prelim})
\begin{equation} \label{Q}
Q(x)=\Tr (x S(x))
\end{equation}
where $\Tr(\cdot)$ denotes the trace map from $\F_{q^n}$ to $\F_q$, that is, 
$\Tr(x)=x+x^q+\cdots+x^{q^{n-1}}$.

Let $N(\mathcal{X})$ be the number of $\F_{q^n}$-rational points of the curve $\mathcal{X}$ and $N(Q)$ denote the cardinality
\begin{equation*}
N(Q) = \left|\left\{ x \in \F_{q^n} \mid \Tr \left(xS(x)\right) = 0 \right\}\right|.
\end{equation*}
From Hilbert's Theorem 90 we obtain
\begin{equation*}
N(\mathcal{X})=1+q N(Q),
\end{equation*}
and furthermore by the Hasse-Weil inequality we know that 
\begin{equation*}
q^n+1-2g(\mathcal{X})\sqrt{q^n} \leq N(\mathcal{X}) \leq q^n+1+2g(\mathcal{X})\sqrt{q^n}
\end{equation*}
where $g(\mathcal{X})$ is the genus of $\mathcal{X}$.

Curves attaining the Hasse-Weil bounds have special attention. 
If the number of $\F_{q^n}$ rational points of a curve is $q^n+1+2g(\mathcal{X})\sqrt{q^n}$ then it is called a maximal curve, and if the number of $\F_{q^n}$ rational points of a curve is $q^n+1-2g(\mathcal{X})\sqrt{q^n}$ then it is called a minimal curve. 

In this work we determine examples of minimal and maximal curves of type \eqref{chi}. Our investigation is based on the type of the quadratic form associated with the curve. In particular we generalize curves constructed in \cite{OS4}.

\section{Preliminaries}\label{sec.prelim}

In this section we first present some definitions and facts that we use in this paper connecting Artin-Schreier type curves and quadratic forms. A {\it quadratic form}  $Q: \mathbb{F}_{q^n} \rightarrow \mathbb{F}_q$ is a map such that
\begin{enumerate}
	\item [i)] $Q(ax)=a^2Q(x)$ for all $a \in \mathbb{F}_q$ and $x \in \mathbb{F}_{q^n}$.
	\item [ii)]$B(x,y)=Q(x+y)-Q(x)-Q(y)$ is a bilinear map over $\mathbb{F}_{q^n}$.
\end{enumerate}

The radical $W$ associated to the quadratic form $Q$ is defined as 
\begin{equation*}
W=\left\{ x \in \F_{q^n}: B(x,y)=0 \mbox{   for all $y \in \F_{q^n}$}\right\}.
\end{equation*}
Note that $W$ is an $\F_q$-linear subspace of $\F_{q^n}$ and let $w$ be the $\F_q$-dimension of $W$. The difference $n-w$ is called the codimension of the radical.

For the algebraic curve 
\begin{equation}
 \label{artin}
\mathcal{X}:\quad y^q - y= x S(x)=\sum_{i=0}^{m} s_i x^{q^i+1}.
\end{equation}
we consider the quadratic form $Q: \mathbb{F}_{q^n} \rightarrow \mathbb{F}_q$ given by $Q(x)=Tr(xS(x))$, where $Tr$ denotes the Trace function from $\mathbb F_{q^n}$ to $\mathbb{F}_q$.
In 2007   ~\c{C}ak\c{c}ak and \"Ozbudak, using the classification of quadratic forms, determined the exact value of $N(\mathcal{X})$, the number of $\F_{q^n}$ rational points of the curve $\mathcal{X}$ (see \cite[Theorem 3.1]{COJPAA}). And we obtain 
\begin{equation*}
N(\mathcal{X})=
\begin{cases}
1+q^n\pm (q-1)q^{\frac{n+w}{2}} & $, if $ w $ is even$,\\
1+q^n & $, if $ w $ is odd.$
\end{cases}
\end{equation*}

The curve $\mathcal{X}$ defined on (\ref{artin})
has genus $g(\mathcal{X})=\frac{q-1}{2}q^m$, see  \cite[Proposition 3.7.10]{S} and for even $w$ we obtain: the curve $\mathcal{X}$ is maximal or minimal over $\mathbb{F}_{q^n}$ if and only if the dimension of the $\mathbb{F}_q$-vector space $W$ is $w=2m$. 

Now we present a result about the vector space $W$. Since the proof is short we include it here for the reader's convenience.
\begin{lemma}\cite[Lemma 2.1]{COJPAA} \label{W}
	Let $S(x)=s_0+s_1x^q+ \dots + s_mx^{q^m} \in \mathbb{F}_{q^n}[x]$ and $Q(x)=Tr(xS(x))$ be the quadratic form associated to $S(x)$. The elements in $W=\{ x \in \mathbb{F}_{q^n} \ : \ B(x,y)=0 \ \forall\ y \in \mathbb{F}_{q^n}\}$ are the roots in $\mathbb{F}_{q^n}$ of the polynomial
\begin{equation*}
	\sum_{i=0}^{m-1} s_{m-i}^{q^i}x^{q^{m-i}}+2s_0^{q^m}x^{q^m}+\sum_{i=1}^m s_i^{q^m}x^{q^{m+i}} \in \mathbb{F}_{q^n}[x],
\end{equation*}
	and $W$ has dimension less than $2m+1$.
\end{lemma}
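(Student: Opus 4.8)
The plan is to compute the bilinear form $B(x,y)=Q(x+y)-Q(x)-Q(y)$ explicitly and then read off the radical $W$ as the set of $x$ for which the linear functional $y\mapsto B(x,y)$ vanishes identically. First I would expand
$$
Q(x+y)-Q(x)-Q(y)=\Tr\bigl((x+y)S(x+y)\bigr)-\Tr(xS(x))-\Tr(yS(y)),
$$
using that $S$ is $\F_q$-linearized and additive, so that $S(x+y)=S(x)+S(y)$. The cross terms give $B(x,y)=\Tr\bigl(xS(y)+yS(x)\bigr)$. Writing $xS(y)+yS(x)=\sum_{i=0}^m s_i\bigl(xy^{q^i}+yx^{q^i}\bigr)$, I would use the $\F_q$-linearity and the Galois-invariance of the trace, namely $\Tr(z)=\Tr(z^{q^j})$, to move all the $q^i$-th power Frobenius twists off of $y$ and onto $x$ and the coefficients. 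Concretely, $\Tr(s_i y x^{q^i})=\Tr\bigl((s_i y x^{q^i})^{q^{n-i}}\bigr)=\Tr\bigl(s_i^{q^{n-i}} y^{q^{n-i}} x\bigr)$, so after reindexing everything is of the form $\Tr\bigl(x\cdot L(y)\bigr)$ for a single $\F_q$-linearized polynomial $L$ in $y$.

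Once $B(x,y)=\Tr\bigl(x\,L(y)\bigr)$, or dually $B(x,y)=\Tr\bigl(y\,\widetilde L(x)\bigr)$ for the "adjoint" linearized polynomial $\widetilde L$, the key step is: since the trace form $\langle a,b\rangle=\Tr(ab)$ is a nondegenerate $\F_q$-bilinear form on $\F_{q^n}$, the condition $B(x,y)=0$ for all $y\in\F_{q^n}$ is equivalent to $\widetilde L(x)=0$. So I would carefully carry out the reindexing to identify $\widetilde L(x)$ with the displayed polynomial
$$
\sum_{i=0}^{m-1} s_{m-i}^{q^i}x^{q^{m-i}}+2s_0^{q^m}x^{q^m}+\sum_{i=1}^m s_i^{q^m}x^{q^{m+i}},
$$
noting that the $s_0$ term appears twice (once from $xS(y)$ and once from $yS(x)$), which accounts for the coefficient $2$, while for $i\ge 1$ the two contributions land in different Frobenius degrees and do not merge. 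I should also be mindful that the exponents $q^{m+i}$ are to be read modulo the relation $x^{q^n}=x$ on $\F_{q^n}$, but as a formal linearized polynomial one can normalize the top Frobenius degree to be $q^{2m}$ (coming from the $i=m$ term $s_m^{q^m}x^{q^{2m}}$).

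Finally, for the dimension bound: the displayed polynomial is $\F_q$-linearized of Frobenius degree at most $2m$ (top term $s_m^{q^m} x^{q^{2m}}$), hence has at most $q^{2m}$ roots in $\overline{\F_q}$, so $W$, being the $\F_q$-vector space of its roots in $\F_{q^n}$, has $\F_q$-dimension at most $2m$, i.e.\ strictly less than $2m+1$. If $s_m\neq 0$ this is immediate; if $s_m=0$ one simply drops to a lower Frobenius degree and the bound only improves. The main obstacle I anticipate is purely bookkeeping: getting the index shifts and the Frobenius exponents (especially the split of the two cross-term contributions and the doubling at $i=0$) exactly right so that the polynomial comes out in the stated normalized form; there is no conceptual difficulty beyond nondegeneracy of the trace pairing.
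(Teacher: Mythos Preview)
Your proposal is correct and follows essentially the same route as the paper: expand $B(x,y)=\Tr(xS(y))+\Tr(yS(x))$, use the Frobenius-invariance of $\Tr$ to collect everything as $\Tr\bigl(y\cdot \widetilde L(x)\bigr)$, invoke nondegeneracy of the trace pairing to conclude $\widetilde L(x)=0$, and then read off the degree bound. The only cosmetic difference is that the paper writes the intermediate expression with negative Frobenius exponents $\sum (s_i x)^{q^{-i}}+\sum s_i x^{q^i}$ and then raises to the $q^m$-th power at the end, whereas you normalize as you go; the content is the same.
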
 

\begin{proof}
 Write $B(x,y)=Tr(xS(y))+Tr(yS(x))$. 
From $Tr(a^{q^k})=Tr(a), \forall a \in  \mathbb{F}_{q^n}$ and $k=0,\dots d$ and $Tr$ being an additive function, it follows that for any $a, b \in  \mathbb{F}_{q^n}$
\begin{align*}
B(a,b)&=Tr\left(a\sum_{i=0}^m s_ib^{q^i}\right)+Tr\left(b\sum_{i=0}^m s_ia^{q^i}\right)\\&=Tr\left(b\sum_{i=0}^m (s_ia)^{q^{-i}}\right)+Tr\left(b\sum_{i=0}^m s_ia^{q^i}\right)\\
&=Tr\left(b\left(\sum_{i=0}^m (s_ia)^{q^{-i}}+\sum_{i=0}^m s_ia^{q^i}\right)\right).
\end{align*}
For any $a \in  \mathbb{F}_{q^n}$, we have that $B(a,b)=0 \, \forall \, b \in  \mathbb{F}_{q^n}$ if and only if $a$ is a root in $ \mathbb{F}_{q^n} $ of the degree $q^{2d}$ polynomial 
$\sum_{i=0}^m (s_ix)^{q^{-i}}+\sum_{i=0}^m s_ix^{q^i}$, or equivalently, a root of $\sum_{i=0}^m (s_ix)^{q^{d-i}}+\sum_{i=0}^m s_i^{q^m}x^{q^{m+i}}$.
\end{proof}

The following result was proved in \cite{Cem} using some tools from algebraic geometry and was also proved in \cite{OS4} using only elementary tools. 
\begin{prop}\label{prop1}
Let $q$ be a prime power and let $m \geq 1$ be an integer.
Consider the curve $\mathcal{X}$ over $\F_{q^{2m}}$ defined by
\begin{equation*}
\mathcal{X}:\quad y^q - y= x \left( s_0 x + s_1 x^q + \cdots + s_m x^{q^m} \right).
\end{equation*}
Assume that $s_m\ne 0$ and $\mathcal{X}$ is maximal over $\F_{q^{2m}}$. Then $s_0=s_1=\cdots=s_{m-1}=0$ and $s_m+{s_m}^{q^m}=0$. The converse holds as well. 
\end{prop}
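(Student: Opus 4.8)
The plan is to reduce the maximality condition to an explicit statement about the radical $W$ and then analyze the linearized polynomial from Lemma~\ref{W}. Since $n=2m$ here and $s_m\neq 0$, the curve $\mathcal{X}$ has genus $g=\frac{q-1}{2}q^m$; by the remark following the displayed formula for $N(\mathcal{X})$, $\mathcal{X}$ is maximal over $\F_{q^{2m}}$ precisely when $w=\dim_{\F_q}W=2m=n$, i.e. when $W=\F_{q^{2m}}$, \emph{and} the sign in $N(\mathcal{X})=1+q^n-(q-1)q^{(n+w)/2}$ is the minus sign. So the first step is: $\mathcal{X}$ maximal $\iff$ every element of $\F_{q^{2m}}$ lies in the radical (plus a sign condition I will need to track).

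Next I would invoke Lemma~\ref{W}: $W$ is the set of $\F_{q^{2m}}$-roots of the $\F_q$-linearized polynomial
\begin{equation*}
L(x)=\sum_{i=0}^{m-1} s_{m-i}^{q^i}x^{q^{m-i}}+2s_0^{q^m}x^{q^m}+\sum_{i=1}^m s_i^{q^m}x^{q^{m+i}},
\end{equation*}
which has degree $q^{2m}$. The condition $W=\F_{q^{2m}}$ forces $L(x)$ to vanish on all of $\F_{q^{2m}}$, hence $L(x)$ must be divisible by $x^{q^{2m}}-x$; since $\deg L = q^{2m}$, this means $L(x)=s_m^{q^m}(x^{q^{2m}}-x)$ up to the leading coefficient, i.e. comparing coefficients of the linearized polynomial $L(x)$ with those of $s_m^{q^m}(x^{q^{2m}}-x)$. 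Matching the coefficient of $x^{q^{2m}}$ is automatic; matching the coefficient of $x^{q^j}$ for $1\le j\le 2m-1$ kills the middle terms: for $1\le i\le m-1$ we get $s_{m-i}^{q^i}=0$, hence $s_1=\dots=s_{m-1}=0$; the coefficient of $x^{q^m}$ gives $2s_0^{q^m}=0$, hence (odd characteristic) $s_0=0$; and the coefficient of $x^{q}$ must be $-s_m^{q^m}$, which with $s_1=0$ already... here one has to be slightly careful about index overlaps between the three sums when $m$ is small, but for $2\le i\le m$ the terms $s_i^{q^m}x^{q^{m+i}}$ have exponents $q^{m+2},\dots,q^{2m}$ and these coincide with none of the lower terms, so they must all vanish too, recovering again $s_2=\dots=s_m$? — no: $s_m^{q^m}x^{q^{2m}}$ is the leading term and stays; it is $s_i$ for $1\le i\le m-1$ that must vanish. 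The surviving relation is the coefficient of $x^q$: the only contribution is from $i=m-1$ in the first sum, namely $s_1^{q^{m-1}}x^q$, which we have already set to zero — so one also needs the $-x$ term of $s_m^{q^m}(x^{q^{2m}}-x)$ to be accounted for. This is where the relation $s_m+s_m^{q^m}=0$ must enter: the linear term of $L$ is $s_m^{q^0}x^{q^m}$-type... I will need to recompute $L$ carefully, but the upshot I expect is that after $s_0=\dots=s_{m-1}=0$ the polynomial becomes $L(x)=s_m x^{q^m}+s_m^{q^m}x^{q^{2m}}$, and requiring this to vanish on $\F_{q^{2m}}$ (where $x^{q^{2m}}=x$) forces $s_m^{q^m}x = -s_m x^{q^m}$ for all $x$, i.e. $s_m^{q^m}=-s_m$ together with the identity $x=x^{q^m}$ failing — so in fact one should argue that $L(x)=s_m(x^{q^m}+s_m^{q^m-1}x^{q^{2m}})$ divides $x^{q^{2m}}-x$ only when $s_m^{q^m}=-s_m$, giving $L(x)=s_m(x^{q^m}-x^{q^{2m}})$... the sign bookkeeping is the delicate point.

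For the converse, I would substitute $s_0=\dots=s_{m-1}=0$, $s_m^{q^m}=-s_m$ directly: then $S(x)=s_mx^{q^m}$, $L(x)=s_m x^{q^m}-s_m x^{q^{2m}}=-s_m(x^{q^{2m}}-x^{q^m})$, which is the $q^m$-th power of $-s_m^{1/q^m}(x^{q^m}-x)$ composed appropriately — in any case it vanishes identically on $\F_{q^{2m}}$ since $x^{q^{2m}}=x^{q^m}$ is false but $x\mapsto x^{q^{2m}}-x^{q^m}=(x^{q^{m}}-x)^{q^m}$ vanishes on $\F_{q^{2m}}$ iff $x^{q^m}\in\F_{q^m}$... hmm, that is \emph{not} all of $\F_{q^{2m}}$. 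So I must instead verify $W=\F_{q^{2m}}$ by a direct trace computation: $B(a,b)=\Tr\!\big(b(s_m^{q^{-m}}a^{q^{-m}}+s_m^{q^m}a^{q^m})\big)$ and the condition $s_m^{q^m}=-s_m$ makes the inner bracket a Frobenius-skew element whose trace against every $b$ vanishes. Concretely, using $\Tr(z^{q^k})=\Tr(z)$, $\Tr(b\cdot s_m^{q^{-m}}a^{q^{-m}})=\Tr((b\cdot s_m^{q^{-m}}a^{q^{-m}})^{q^m})=\Tr(b^{q^m}s_m a)$, and one checks this cancels $\Tr(b s_m^{q^m}a^{q^m})=\Tr((\cdot)^{q^{-m}})=\Tr(b^{q^{-m}}s_m^{q^{-m}\cdot q^m}\cdots)$ — this needs care but comes out to $0$ for all $a,b$ using $n=2m$ (so $q^{-m}\equiv q^m$) and $s_m^{q^m}=-s_m$. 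Then $w=2m$, the curve is maximal or minimal, and a final sign check (evaluating $N(Q)$, e.g. via a Gauss-sum / Weil-sum argument on the explicit form $\Tr(s_m x^{q^m+1})$, or by noting $\mathcal{X}$ is already known maximal for these parameters) pins down maximality rather than minimality.

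\textbf{Main obstacle.} I expect the genuinely fiddly part to be the sign: both the coefficient/divisibility bookkeeping in $L(x)$ (ensuring the condition is exactly $s_m+s_m^{q^m}=0$ and not $s_m-s_m^{q^m}=0$) and, separately, showing that when $w=2m$ the curve is \emph{maximal} rather than minimal — i.e. determining which sign occurs in $N(\mathcal{X})=1+q^n\pm(q-1)q^{(n+w)/2}$ for the specific form $Q(x)=\Tr(s_m x^{q^m+1})$. The structural part (forcing $s_0=\dots=s_{m-1}=0$ from $L$ vanishing on $\F_{q^{2m}}$, using odd characteristic for the $2s_0$ term) should be routine coefficient comparison.
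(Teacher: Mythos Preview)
The paper does not actually prove Proposition~\ref{prop1}; it merely quotes it as a known result from \cite{Cem} and \cite{OS4}. So there is no ``paper's own proof'' to compare against. That said, your strategy is the right elementary one (and is essentially what \cite{OS4} does): reduce maximality to $w=2m$, then read this off from the linearized polynomial of Lemma~\ref{W}.

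Two remarks will straighten out the places where you hesitate.

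\textbf{The sign issue dissolves.} You worry about showing that $w=2m$ gives \emph{maximal} rather than minimal. But $w=n=2m$ means $W=\F_{q^{2m}}$, hence $B\equiv 0$, hence $Q$ is additive. Combining additivity with $Q(ax)=a^2Q(x)$ at $a=2$ gives $2Q(x)=Q(2x)=4Q(x)$, so $Q\equiv 0$ in odd characteristic. Therefore $N(Q)=q^{2m}$ and $N(\mathcal{X})=1+q\cdot q^{2m}=1+q^{2m+1}$, which is exactly the Hasse--Weil upper bound for genus $\tfrac{q-1}{2}q^m$. Thus $w=2m$ is \emph{equivalent} to maximality over $\F_{q^{2m}}$; no separate Gauss-sum argument is needed, in either direction.

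\textbf{The coefficient bookkeeping.} Your confusion with $L(x)$ comes from a typo in the displayed polynomial of Lemma~\ref{W}; the proof of that lemma gives the correct form
\[
L(x)=\sum_{j=0}^{m-1} s_{m-j}^{\,q^{j}}x^{q^{j}}\;+\;2s_0^{\,q^{m}}x^{q^{m}}\;+\;\sum_{i=1}^{m} s_i^{\,q^{m}}x^{q^{m+i}},
\]
whose monomials have pairwise distinct $q$-power exponents $q^{0},\ldots,q^{2m}$. The condition $W=\F_{q^{2m}}$ forces $L(x)=s_m^{\,q^{m}}(x^{q^{2m}}-x)$, and matching coefficients immediately yields $s_1=\cdots=s_{m-1}=0$ (twice), $2s_0^{\,q^m}=0$ hence $s_0=0$, and from the $x$-coefficient $s_m=-s_m^{\,q^m}$. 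Conversely, with these conditions $L(x)=s_m x + s_m^{\,q^m}x^{q^{2m}}=s_m(x-x^{q^{2m}})$ vanishes on $\F_{q^{2m}}$, so $w=2m$ and by the previous paragraph $\mathcal{X}$ is maximal. Your detour through $x^{q^{2m}}-x^{q^m}$ and the direct trace computation of $B(a,b)$ is unnecessary (and the intermediate line $L(x)=s_m x^{q^m}-s_m x^{q^{2m}}$ in your converse is a slip coming from the same typo).
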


\begin{thm}[\cite{OS4}] \label{minimal-thm}
	Let $q$ be a power of an odd prime and $k$, $m$ be positive integers with $m \ge 2k$. Let \begin{equation*}
	S(x)=s_kx^{q^k}+s_{k+1}x^{q^{k+1}}+\cdots+s_{m-k}x^{q^{m-k}} \in\mathbb F_{q^{2m}}[x] \;\;\; \text{ with } s_ks_{m-k}\ne 0.
	\end{equation*} Assume that the radical of the quadratic form $\Tr(xS(x))$ has dimension $2m-2k$ over $\F_q$. Then the curve \begin{equation*}
	\mathcal{X}:y^q-y=xS(x)
	\end{equation*} is a minimal curve over $\mathbb F_{q^{2m}}$.
\end{thm}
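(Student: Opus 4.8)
The first point is that $\mathcal{X}$ is already \emph{forced} to be maximal or minimal, so the whole problem is to determine one sign. Put $M:=m-k$; since $s_{m-k}\neq 0$ the $\mathbb{F}_q$-linearized polynomial $S$ has $q$-degree $M$, hence $\mathcal{X}$ has genus $g=\tfrac{q-1}{2}q^{M}$ and, by hypothesis, the radical $W$ of $Q(x)=\Tr(xS(x))$ has $\mathbb{F}_q$-dimension $w=2m-2k=2M$. As $w$ is even and equals twice the $q$-degree of $S$, the criterion of \cite{COJPAA} recalled above applies: $\mathcal{X}$ is maximal or minimal over $\mathbb{F}_{q^{2m}}$, with
\[
N(\mathcal{X})=q^{2m}+1+\varepsilon\,(q-1)\,q^{\,2m-k},\qquad \varepsilon\in\{+1,-1\},
\]
and $\varepsilon=+1$ exactly in the maximal case. (Necessarily $k\geq 1$: if $k=0$ then $w=2m$ would force the self-adjoint operator $x\mapsto\sum_i(s_ix)^{q^{-i}}+\sum_i s_ix^{q^i}$ of Lemma~\ref{W} to be identically zero on $\mathbb{F}_{q^{2m}}$, but the coefficient of $x$ in it is $2s_0=2s_k\neq 0$.) So it suffices to prove $\varepsilon=-1$, i.e. that $\mathcal{X}$ is \emph{not} maximal over $\mathbb{F}_{q^{2m}}$.

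Assume for contradiction that $\mathcal{X}$ is maximal over $\mathbb{F}_{q^{2m}}$, so every reciprocal root of its $L$-polynomial equals $-q^{m}$. The plan is to push this down to the quadratic subfield $\mathbb{F}_{q^{m}}\subset\mathbb{F}_{q^{2m}}$ through the tower $\Tr_{\mathbb{F}_{q^{2m}}/\mathbb{F}_q}=\Tr_{\mathbb{F}_{q^{m}}/\mathbb{F}_q}\circ\Tr_{\mathbb{F}_{q^{2m}}/\mathbb{F}_{q^{m}}}$, which turns $Q$ into the quadratic form $\widehat{Q}\colon\mathbb{F}_{q^{m}}\to\mathbb{F}_q$, $\widehat{Q}(x)=\Tr_{\mathbb{F}_{q^{m}}/\mathbb{F}_q}\!\big(x\sum_{i=k}^{m-k}(s_i+s_i^{q^{m}})x^{q^{i}}\big)$. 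Maximality of $\mathcal{X}$ over $\mathbb{F}_{q^{2m}}$ means the $q^{m}$-power Frobenius has each eigenvalue a square root of $-q^{m}$; these are non-real algebraic integers stable under $\alpha\mapsto q^{m}/\alpha=-\alpha$, so they cancel in pairs, which pins the $\mathbb{F}_{q^{m}}$-point count to its generic value and hence forces $\operatorname{rad}\widehat{Q}$ to be odd-dimensional. (If $S$ is not already defined over $\mathbb{F}_{q^{m}}$, one first replaces it by an $\mathbb{F}_q$-linearly equivalent representative that is --- a Frobenius twist $x\mapsto\lambda x^{q^{j}}$ --- which changes $Q$ only up to equivalence and so does not affect this conclusion.)

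Finally one derives a contradiction by computing $\dim(\operatorname{rad}\widehat{Q})$ via Lemma~\ref{W} and Proposition~\ref{prop1} and finding it \emph{even}. In the boundary case $m=2k$ --- where $S$ is the single monomial $s_kx^{q^{k}}$ and $w=2m$ forces $s_k$ to be a $(q^{k}+1)$-st power, so after the twist $S(x)=x^{q^{k}}$ --- a direct count gives $N(\mathcal{X}/\mathbb{F}_{q^{m}})=q^{m}+1-(q-1)q^{k}\neq q^{m}+1$, contradicting the previous step. When $m>2k$ the leading term of $\widehat{S}$ is still at $q^{m-k}$, and Proposition~\ref{prop1} --- whose conclusion is that a maximal curve of this leading shape has all lower coefficients zero --- forces $s_k=0$ since $k\le m-k-1<m-k$, again a contradiction. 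Hence $\varepsilon=-1$ and $\mathcal{X}$ is minimal. The genuinely delicate point, which I expect to do the real work, is exactly this last step: reconciling the fixed ground field $\mathbb{F}_{q^{2m}}$ with the field $\mathbb{F}_{q^{2(m-k)}}$ in which Proposition~\ref{prop1} is phrased (they differ once $k\geq 1$), or equivalently carrying out the explicit evaluation of the sign $\varepsilon$ for multi-term $S$, together with handling the case $m=2k$ uniformly with $m>2k$; everything else is routine manipulation of traces, Hilbert's Theorem~90 and the closed formula for $N(\mathcal{X})$.
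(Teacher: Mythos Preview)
The paper does not prove this theorem --- it is quoted from \cite{OS4} --- so there is no in-paper argument to compare against. Evaluating your proposal on its own merits, there is a genuine gap, and in fact you flag it yourself at the end.

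The first paragraph is fine: with $w=2(m-k)$ equal to twice the $q$-degree of $S$, the curve is forced to be maximal or minimal, and the task is to pin down the sign. The trouble starts with the descent. You want to view $\mathcal{X}$ over $\mathbb{F}_{q^m}$ and use that a curve maximal over $\mathbb{F}_{q^{2m}}$ has exactly $q^m+1$ points over $\mathbb{F}_{q^m}$. But the coefficients $s_i$ lie only in $\mathbb{F}_{q^{2m}}$, so $\mathcal{X}$ need not be defined over $\mathbb{F}_{q^m}$ at all, and your parenthetical fix --- ``replace $S$ by an $\mathbb{F}_q$-linearly equivalent representative defined over $\mathbb{F}_{q^m}$ via a Frobenius twist $x\mapsto\lambda x^{q^j}$'' --- does not work: such a substitution sends each $s_i$ to $s_i\lambda^{q^i+1}$ (up to a $q$-power), and there is no reason a single $\lambda$ and $j$ simultaneously push all of $s_k,\ldots,s_{m-k}$ into $\mathbb{F}_{q^m}$. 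Without that descent, the eigenvalue-pairing argument and the form $\widehat{Q}$ have no purchase.

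The second difficulty is the one you name explicitly: Proposition~\ref{prop1} characterizes maximality of a degree-$q^{M}$ curve over $\mathbb{F}_{q^{2M}}$, whereas here the top degree is $q^{m-k}$ but the ground field is $\mathbb{F}_{q^{2m}}$ with $m>m-k$. These do not match once $k\ge 1$, so the proposition gives no information about $s_k$. (Incidentally, in the boundary case $m=2k$ you write ``$w=2m$'', but $w=2m-2k=2k$, and it is not clear how the hypothesis on $W$ over $\mathbb{F}_{q^{2m}}$ yields the claimed normalization $S(x)=x^{q^k}$.) As it stands, the plan reduces the problem to precisely the step that carries all the content, and that step is not done.
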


\section{Explicit curves from quadratic forms whose radicals have codimension two}\label{sec.main}

Our first result characterizes maximal curves 
from quadratic forms whose radicals have codimension two,
over $\F_{q^{2m}}$.

\begin{thm}\label{thm1}
Let $q$ be a power of an odd prime, and let $m\geq 2$ be a positive integer.
Let \begin{equation*}
S(x)=s_0 x + s_1 x^q+\cdots+s_{m-1} x^{q^{m-1}}\in\F_{q^{2m}}[x]\quad \mbox{with}\quad s_0 s_{m-1}\ne 0.
\end{equation*}
Then the curve 
\begin{equation*}
\mathcal{X}:\quad y^q - y= x S(x)
\end{equation*}
is a maximal curve over $\F_{q^{2m}}$ if and only if the following equations are satisfied
\begin{equation}\label{eq1.theorem1}
\begin{array}{ll}
&c^q s_1= -\left(c^{2q}s_0^q+s_0\right) \\
&c^{q^2} s_2 = -\left(2c^{q}s_0^q+c^{q^2+q}s_1^q + s_1\right) \\
&c^{q^3} s_3 = -\left(c^{q}s_1^q+c^{q^3+q}s_2^q + s_2\right) \\
&\vdots \\
&c^{q^i} s_i = -\left(c^{q}s_{i-2}^q+c^{q^{i}+q}s_{i-1}^q + s_{i-1}\right) \\
&\vdots \\
&c^{q^{m-1}} s_{m-1} = -\left(c^{q}s_{m-3}^q+c^{q^{m-1}+q}s_{m-2}^q + s_{m-2}\right) \\
\end{array} 
\end{equation}
and 
\begin{equation}\label{eq2.theorem1}
\begin{array}{ll}
&c^q s_{m-2}^q + c^{q^{m}+q}s_{m-1}^q + s_{m-1}=0 \\
&c s_{m-1} + \left(c s_{m-1}\right)^{q^m}=0
\end{array} 
\end{equation}
for some $c\in \F_{q^{2m}}\setminus\{0\}$.
\end{thm}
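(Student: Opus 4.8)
The plan is to reduce the maximality of $\mathcal{X}$ over $\F_{q^{2m}}$ to a condition on the radical $W$ of the quadratic form $Q(x)=\Tr(xS(x))$, and then to translate that condition into the displayed system of equations via a clever substitution. First I would invoke the facts collected in Section~\ref{sec.prelim}: since $\deg S = q^{m-1}$, the curve $\mathcal{X}$ has genus $g = \tfrac{q-1}{2}q^{m-1}$, and by the \c{C}ak\c{c}ak--\"Ozbudak formula $\mathcal{X}$ is maximal or minimal over $\F_{q^{2m}}$ if and only if $w = \dim_{\F_q} W = 2(m-1)$. Moreover, because $s_0 s_{m-1}\neq 0$, Lemma~\ref{W} shows $W$ is the zero set in $\F_{q^{2m}}$ of an $\F_q$-linearized polynomial of degree $q^{2(m-1)}$ whose lowest-degree term is $s_{m-1}^{q^{m-1}}x^{q}$ (nonzero) and whose top term is $s_{m-1}^{q^{m-1}}x^{q^{2m-2}}$ (nonzero); hence $\dim W \le 2(m-1)$ always, and equality holds if and only if \emph{all} $q^{2(m-1)}$ roots of that linearized polynomial lie in $\F_{q^{2m}}$, i.e. the linearized polynomial splits completely over $\F_{q^{2m}}$ with every root having multiplicity one (automatic for separable linearized polynomials here since the $x^q$ coefficient is nonzero).

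Next I would set up the key change of variables. The condition "$W$ has $\F_q$-dimension $2(m-1)$ and $\mathcal X$ is maximal" should be equivalent to the existence of $c\in\F_{q^{2m}}\setminus\{0\}$ for which the linearized polynomial $L(x)$ from Lemma~\ref{W} (after dividing by the leading coefficient) factors as a composition, essentially $L(x) = A\bigl(x^q - c x^{q^{?}}\cdots\bigr)$; concretely, I expect that maximality forces $S(x)$ to satisfy a first-order $q$-linear recursion tying $s_i$ to $s_{i-1}$ and $s_{i-2}$, governed by the single parameter $c$. The natural way to see this is to compare $\mathcal X$ with the model curve of Proposition~\ref{prop1}: a maximal curve with a singular point of the given type over $\F_{q^{2m}}$ must be $\F_{q^{2m}}$-isomorphic (via a linear substitution $x\mapsto x + (\text{something})$, $y\mapsto y+(\text{something})$, using Artin--Schreier equivalence $y^q-y \sim y^q - y + (z^q-z)$) to one whose $x S(x)$ part is of the reduced shape appearing in Proposition~\ref{prop1} or Theorem~\ref{minimal-thm}. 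Carrying out this normalization, the substitution $x \mapsto x - c\,x^{q}$ (or its inverse as a $q$-linearized operator) applied to the equation $y^q-y = xS(x)$ will, after expanding $\Tr$ and matching coefficients of $x^{q^i+1}$, produce exactly the cascade \eqref{eq1.theorem1} for $i=1,\dots,m-1$ together with the two closing relations \eqref{eq2.theorem1} that encode the boundary behaviour (the vanishing of the would-be $x^{q^m+1}$ term and the "$s_m + s_m^{q^m}=0$"-type condition inherited from Proposition~\ref{prop1}).

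Concretely, for the forward direction I would argue: if $\mathcal X$ is maximal, then $\dim W = 2(m-1)$, so $L(x)$ splits over $\F_{q^{2m}}$; writing $L(x) = s_{m-1}^{q^{m-1}}\prod(x - \alpha)$ and using the $\F_q$-linear structure of the root space, one extracts a distinguished root ratio $c$ (for instance $c$ built from a generator of $W/\F_q$-related data, or from the requirement that the associated symplectic/alternating form on $\F_{q^{2m}}/W$ be hyperbolic with the correct sign). Substituting and using that the kernel of the $\F_q$-linear map $x\mapsto x^q - cx$ (composed appropriately) must coincide with $W$, I equate the linearized polynomial of Lemma~\ref{W} with the composition of $(x^q - cx)$-type factors; reading off coefficients of $x^{q^{j}}$ for $j = m-1, m, m+1,\dots, 2m-2$ gives precisely \eqref{eq1.theorem1}, and the two lowest/highest leftover coefficients give \eqref{eq2.theorem1}. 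For the converse, assuming \eqref{eq1.theorem1}–\eqref{eq2.theorem1} hold for some $c\ne 0$, I reverse the computation: the recursion lets me rewrite $L(x)$ as an explicit composition of $q$-linearized binomials all of whose roots lie in $\F_{q^{2m}}$ (here the condition $cs_{m-1} + (cs_{m-1})^{q^m}=0$ is exactly what guarantees the relevant norm/trace obstruction vanishes, cf. the "$s_m + s_m^{q^m}=0$" in Proposition~\ref{prop1}), forcing $\dim W = 2(m-1)$ and pinning down the sign so that $\mathcal X$ is maximal rather than minimal.

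The main obstacle I anticipate is twofold. First, bookkeeping: the linearized polynomial in Lemma~\ref{W} has terms indexed by $q^{m-1-i}$ and $q^{m+i}$, and one must carefully organize the Frobenius twists so that matching coefficients against a telescoping product $\prod_j (x^q - c_j x)$ yields the \emph{first-order} recursion $c^{q^i}s_i = -(c^q s_{i-2}^q + c^{q^i+q}s_{i-1}^q + s_{i-1})$ with a \emph{single} $c$ rather than a family $\{c_j\}$ — proving that one parameter suffices is the crux and likely uses the self-reciprocal symmetry of $L(x)$ (the coefficient of $x^{q^{m-1-i}}$ and of $x^{q^{m+i}}$ are Frobenius-conjugate), which collapses the degrees of freedom. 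Second, and more delicate, is distinguishing maximal from minimal: the dimension count $w = 2(m-1)$ alone only gives "maximal or minimal," so the sign condition hidden in \eqref{eq2.theorem1} — particularly $cs_{m-1}+(cs_{m-1})^{q^m}=0$, which says $N_{\F_{q^{2m}}/\F_{q^{m}}}$-type data has the right value — must be shown to select the $+$ sign; I would handle this by reducing, via the established isomorphism, to the already-known sign in Proposition~\ref{prop1}.
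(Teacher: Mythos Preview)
Your proposal has the right destination in mind (reduce to Proposition~\ref{prop1} via a degree-raising substitution $x\mapsto x^q+cx$ and an Artin--Schreier shift in $y$), but the mechanism you describe has a genuine gap.

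The central problem is that you repeatedly speak of an \emph{isomorphism} between $\mathcal X$ and a curve of the shape in Proposition~\ref{prop1}. No such isomorphism can exist: $\mathcal X$ has genus $\frac{q-1}{2}q^{m-1}$, while the curves in Proposition~\ref{prop1} have genus $\frac{q-1}{2}q^{m}$. What the paper actually does is build a degree-$q$ \emph{extension} $\mathbb E_2\supset \mathbb E_1$ of function fields by introducing a new variable $z$ with $x=z^q+cz$, and then adjusting $y$ by a polynomial $D(z)$ so that $\mathbb E_2$ is again Artin--Schreier of the form $t^q-t=zR(z)$ with $\deg R=q^{m}$. The nontrivial input you are missing is \cite[Proposition~5.1]{COJPAA}: it guarantees both the existence of the parameter $c\in\F_{q^{2m}}^{*}$ and the crucial fact that $\mathbb E_2$ is maximal over $\F_{q^{2m}}$ if and only if $\mathbb E_1$ is. Once that is in hand, one simply writes out $R(x)=cS(x^q+cx)+D(x)+cs_0x^q$ explicitly and applies Proposition~\ref{prop1} to $R$; the vanishing of the coefficients of $x^{q^i}$ in $R$ for $i<m$ gives \eqref{eq1.theorem1}, and the condition on the top coefficient gives \eqref{eq2.theorem1}.

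Your alternative route through the linearized polynomial $L(x)$ of Lemma~\ref{W} cannot close the argument by itself. Knowing that $L$ splits over $\F_{q^{2m}}$ tells you only that $\dim W=2(m-1)$, i.e.\ that $\mathcal X$ is maximal \emph{or} minimal; the radical carries no sign information. Your plan to ``extract a distinguished root ratio $c$'' from a factorization of $L$ and to collapse a family $\{c_j\}$ to a single $c$ via the self-reciprocal symmetry is not substantiated, and even if it produced the recursion \eqref{eq1.theorem1}, you would still need an independent argument for why \eqref{eq2.theorem1} selects the maximal rather than the minimal case. In the paper this sign is inherited for free from Proposition~\ref{prop1} applied to the covering curve $\mathbb E_2$; without the covering and the maximality-transfer statement of \cite[Proposition~5.1]{COJPAA}, that step does not go through.
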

\begin{proof}
Let $\mathbb E_1=\F_{q^{2m}}(x,y)$ with $y^q - y= xS(x)$ be the function field of $\mathcal{X}$. As the dimension of the radical is $2m-2$, $\deg(S(x))=q^{m-1}$ and
$s_{m-1}\ne 0$,  $\mathbb E_1$ (or equivalently $\mathcal{X}$) is either maximal or minimal over $\F_{q^{2m}}$. Using \cite[Proposition 5.1]{COJPAA} we can construct an extension field $\mathbb E_2$ of $\mathbb E_1$ such that 
\begin{equation*}
\mathbb E_2 \mbox{ is maximal (minimal)} \Leftrightarrow \mathbb E_1 \mbox{ is maximal (minimal)}. 
\end{equation*}
Moreover an affine equation for $\mathbb E_2$ is also given: $\mathbb E_2=\F_{q^{2m}}(z,t)$ with
\begin{equation*}
t^q-t=zR(z).
\end{equation*}
Here \cite[Proposition 5.1]{COJPAA} proves existence of $c\in \F_{q^{2m}}\setminus\{0\}$ such that 
\begin{equation}\label{eq3.theorem1}
\begin{array}{rl}
  D(x)^q &= S(x^q + cx)-c s_0x\quad \mbox{and} \\
  R(x) &= cS(x^q + cx) + D(x) + c s_0 x^q 
 \end{array} 
\end{equation}
in the polynomial ring $\F_{q^{2m}}[x]$. Then using (\ref{eq3.theorem1}) we obtain that 
\begin{align}
  \hspace{-7mm}D(x)^q &= \sum_{i=0}^{m-1} s_i x^{q^{i+1}} + \sum_{i=0}^{m-1} s_i (cx)^{q^{i}} - c s_0x \quad \mbox{and} \\
  \hspace{-7mm}R(x) &=  c \left(\sum_{i=0}^{m-1} s_i x^{q^{i+1}} + \sum_{i=0}^{m-1} s_i (cx)^{q^{i}}\right) 
			+ \left(\sum_{i=0}^{m-1} s_i^{(1/q)} x^{q^{i}} + \sum_{i=1}^{m-1} s_i^{(1/q)} (cx)^{q^{i-1}}\right) + c s_0 x^q.
\end{align}
Using Proposition \ref{prop1}, $\mathbb E_2$ is maximal if and only if the coefficients of $R(x)$ satisfies the equations in 
(\ref{eq1.theorem1}) and (\ref{eq2.theorem1}), which completes the proof.
\end{proof}

If we take all the coefficients $s_i$ of $S(x)$ in $\F_{q^m}$ we obtain the following explicit classifications  in Corollaries \ref{cor1}, \ref{cor2} and \ref{cor3}. These results include the maximal curves obtained in \cite{W} as a very special subcase. Also note that in \cite{W} only the case $q=p$ (prime case) is considered under the condition that $\gcd(p,n)=\gcd(p,2m)=1$. Here we have no such condition.

\begin{cor}\label{cor1}
Let $q$ be a power of an odd prime and  let $m\geq 2$ be a positive integer.
Let
\begin{equation*}
S(x)=s_0 x + s_1 x^q+\cdots+s_{m-1} x^{q^{m-1}}\in\F_{q^m}[x]\quad \mbox{with}\quad s_0 s_{m-1}\ne 0.
\end{equation*}
Then the radical of the quadratic form $\Tr(xS(x))$ has dimension $2m-2$ over $\F_q$ and the curve 
\begin{equation*}
\mathcal{X}:\quad y^q - y= x S(x)
\end{equation*}
is a maximal curve over $\F_{q^{2m}}$ if and only if $q \equiv 3 \mod 4$, $m$ is odd, $s_0\in\F_{q^m}\setminus\{0\}$ and for $1\le i\le m-1$ we have
\begin{equation}
s_i=\left\{
\begin{array}{cc}
0 & \mbox{if $i$ is odd},\\
2s_0^{(q^i+1)/2} & \mbox{if $i$ is even}.\\
\end{array} \right.
\end{equation}
\end{cor}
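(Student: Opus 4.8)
The plan is to specialize Theorem~\ref{thm1} to the case where all coefficients $s_i$ lie in $\F_{q^m}$ and to solve the resulting system \eqref{eq1.theorem1}--\eqref{eq2.theorem1} explicitly. First I would observe that since $s_i\in\F_{q^m}$, we have $s_i^{q^m}=s_i$, which immediately turns the last equation of \eqref{eq2.theorem1}, namely $cs_{m-1}+(cs_{m-1})^{q^m}=0$, into a constraint relating $c$ and $c^{q^m}$: writing it as $c s_{m-1}=-c^{q^m}s_{m-1}^{q^m}=-c^{q^m}s_{m-1}$ and using $s_{m-1}\neq 0$ gives $c^{q^m}=-c$, i.e. $c^{q^m-1}=-1$. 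Since $\F_{q^m}^\times$ has order $q^m-1$, an element $c\in\F_{q^{2m}}^\times$ with $c^{q^m-1}=-1$ exists precisely when $-1$ is \emph{not} a square in the relevant sense; concretely $c^{q^m}=-c$ forces $c^2\in\F_{q^m}$ but $c\notin\F_{q^m}$, and then $c^{2(q^m-1)}=1$ together with $c^{q^m-1}=-1$ is consistent iff $q^m$ is odd (automatic) and the order bookkeeping works out — this is where the condition $q\equiv 3\pmod 4$ and $m$ odd will enter, because $-1$ being a square in $\F_{q^m}$ is equivalent to $q^m\equiv 1\pmod 4$, and we need the opposite.

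Next I would set up a normalization: since $c^2\in\F_{q^m}$, I would try the ansatz (or prove it is forced) that $c^{q}=\pm c$ up to elements of $\F_{q^m}$, or more cleanly introduce $\gamma=c^{q+1}$ or track powers $c^{q^i}$ which, by $c^{q^m}=-c$, cycle with period dividing $2m$ and satisfy $c^{q^i}=(-1)^{\lfloor i/m\rfloor}\cdot(\text{something in }\F_{q^m}\text{-controlled fashion})$. Then I would feed this into the recursion \eqref{eq1.theorem1}. Starting from the first equation $c^q s_1=-(c^{2q}s_0^q+s_0)$ and inductively using $c^{q^i}s_i=-(c^q s_{i-2}^q+c^{q^i+q}s_{i-1}^q+s_{i-1})$, I would prove by induction on $i$ that $s_i=0$ for $i$ odd and $s_i=2 s_0^{(q^i+1)/2}$ for $i$ even, checking the base cases $s_1$ and $s_2$ by hand and then the inductive step. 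The odd case should collapse because both surviving terms on the right-hand side vanish by the inductive hypothesis; the even case should reduce, after dividing by the appropriate power of $c$ and using $c^{q^i-1}$ relations, to the identity $2s_0^{(q^i+1)/2}=2 s_0^{q\cdot(q^{i-2}+1)/2+\text{correction}}$, which holds by the arithmetic of exponents mod $q^m-1$.

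For the converse direction, I would substitute the claimed values of the $s_i$ back into all equations of \eqref{eq1.theorem1}--\eqref{eq2.theorem1}, choosing a specific $c$ with $c^{q^m}=-c$ (which exists exactly when $q\equiv 3\pmod 4$ and $m$ odd), and verify each identity; simultaneously I would invoke Lemma~\ref{W} to confirm that with $s_0 s_{m-1}\neq 0$ the radical has the right dimension $2m-2$ so that Theorem~\ref{thm1} applies in the first place. I would also need to handle the remaining equation of \eqref{eq2.theorem1}, $c^q s_{m-2}^q+c^{q^m+q}s_{m-1}^q+s_{m-1}=0$: since $m$ is odd, $m-1$ is even and $m-2$ is odd, so $s_{m-2}=0$ and this reduces to $c^{q^m+q}s_{m-1}^q+s_{m-1}=0$, i.e. (using $c^{q^m}=-c$) to $-c^{q+1}s_{m-1}^q+s_{m-1}=0$, which must be checked against $s_{m-1}=2s_0^{(q^{m-1}+1)/2}$.

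The main obstacle I expect is the careful arithmetic of the exponents modulo $q^m-1$ combined with the sign bookkeeping coming from $c^{q^m}=-c$: one must track, for each index $i$, the exact power of $c$ that appears when dividing through the recursion, show it is an integer power lying in a consistent residue class, and confirm that the $(-1)$ factors produced by crossing the ``$q^m$ boundary'' conspire to give exactly the stated closed form rather than its negative. Pinning down precisely why this forces $q\equiv 3\pmod 4$ together with $m$ odd — and not merely $q^m\equiv 3\pmod 4$ — is the delicate point, and I would handle it by separating the existence-of-$c$ question (a statement about $\F_{q^{2m}}^\times/\F_{q^m}^\times$) from the recursion-solving question, then showing the two constraints coincide.
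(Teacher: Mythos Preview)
Your overall plan---specialize Theorem~\ref{thm1} to $s_i\in\F_{q^m}$ and solve the system \eqref{eq1.theorem1}--\eqref{eq2.theorem1}---matches the paper's approach. However, two key steps are misidentified or missing.

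First, the existence of $c\in\F_{q^{2m}}^\times$ with $c^{q^m}=-c$ is \emph{not} where $q\equiv 3\pmod 4$ and $m$ odd come from: such a $c$ always exists when $q$ is odd, since the image of $c\mapsto c^{q^m-1}$ on $\F_{q^{2m}}^\times$ is cyclic of order $q^m+1$, which is even, so $-1$ lies in it. So your proposed mechanism for obtaining these two constraints is wrong.

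Second, and more seriously, you do not say how to determine $c$ before running the recursion. The recursion \eqref{eq1.theorem1} has $c$ as an unknown, and you cannot extract a closed form for the $s_i$ in terms of $s_0$ alone without first pinning down $c^2$. The paper's trick is this: raise the first equation $c^qs_1+c^{2q}s_0^q+s_0=0$ to the powers $q^{m-1}$ and $q^{2m-1}$; using $s_0,s_1\in\F_{q^m}$ and $c^{q^m}=-c$ these become
\[
-cs_1^{q^{m-1}}+c^2s_0+s_0^{q^{m-1}}=0,\qquad cs_1^{q^{m-1}}+c^2s_0+s_0^{q^{m-1}}=0,
\]
whence $s_1=0$ and $c^2=-s_0^{q^{m-1}-1}$. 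Only now can the induction proceed to give the closed form for the $s_i$. The constraint $m$ odd then follows simply because the pattern forces $s_i=0$ for odd $i$ while $s_{m-1}\neq 0$. The constraint $q\equiv 3\pmod 4$ comes from the first line of \eqref{eq2.theorem1}: with $s_{m-2}=0$ and $c^{q^m}=-c$ it reads $c^{q+1}=s_{m-1}^{1-q}$; combining this with $c^2=-s_0^{q^{m-1}-1}$ and $s_{m-1}=2s_0^{(q^{m-1}+1)/2}$ yields $(-1)^{(q+1)/2}=1$.

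For the converse the paper does not substitute back into \eqref{eq1.theorem1}--\eqref{eq2.theorem1}; it computes the bilinear form $B(x,y)$ directly, rewrites it (with $s^2=s_0$) as $\Tr\bigl(s_0^{(q^m-1)/2}\,sy^{q^m}\sum_{i=0}^{m-1}(sx)^{q^{2i+1}}\bigr)$, and reads off $\dim W=2m-2$ from $\deg\gcd(x+x^3+\cdots+x^{2m-1},\,x^{2m}-1)=2m-2$. Your substitution plan could also work but requires exhibiting the specific $c$ and verifying every equation, which is more laborious.
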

\begin{proof}

Let $m \geq 2$. Since $s_{m-1} \in F_{q^m}^\ast$, we have  \begin{equation*}
cs_{m-1}+(cs_{m-1})^{q^m}=(c+c^{q^m})s_{m-1}=0
\end{equation*} an so  
$c+c^{q^m}=0$.
Moreover, we have \begin{equation*}
c^qs_1+c^{2q}s_0^q+s_0=0. 
\end{equation*}
If we take the powers $q^{m-1}$ and $q^{2m-1}$ respectively, since $s_0,s_1 \in \mathbb F_{q^m}$ we will obtain the equations \begin{equation*}
-cs_1^{q^{m-1}}+c^2s_0+s_0^{q^{m-1}}=0
\end{equation*}
and \begin{equation*}
cs_1^{q^{m-1}}+c^2s_0+s_0^{q^{m-1}}=0.
\end{equation*}
These equations gives us \begin{equation*}
s_1=0 \;\;\; \text{ and } \;\;\; c^2=-s_0^{q^{m-1}-1}. 
\end{equation*}
This shows that the case $m=2$ cannot happen since $s_0s_1=0$. Let us assume $m\ge 3$. For $i=2,\dots,m-1$ we have the equations \begin{equation*}
c^{q^i}s_i+c^qs_{i-2}^q+c^{q^i+q}s_{i-1}^q+s_{i-1}=0.
\end{equation*}
These equations give us when $q \equiv 3 \mod 4$ 
\begin{equation*}
s_i=\begin{cases} 0 &\text{if $i$ is odd}, \\ 2s_0^{(q^i+1)/2} &\text{if $i$ is even,}   \end{cases} 
\end{equation*} and  when $q \equiv 1 \mod 4$ \begin{equation*}
s_i=\begin{cases} 0 &\text{if $i$ is odd}, \\ (-1)^{i/2}\; 2s_0^{(q^i+1)/2} &\text{if $i$ is even}   \end{cases}
\end{equation*}  where $i\in\{1,\dots,m-1 \}.$ 
	
Since $s_{m-1} \ne 0$, we have $m-1$ is even, so $m$ must be odd. Moreover, since $s_{m-2}=0$ and $c^{q^m}=-c$, the equation \begin{equation*}
c^qs_{m-2}^q+c^{q^m+q}s_{m-1}^q+s_{m-1}=0
\end{equation*} gives us \begin{equation*}
c^{q+1}=s_{m-1}^{1-q}
\end{equation*} and so \begin{equation*}
(-s_0^{q^{m-1}-1})^{(q+1)/2}=(2s_0^{(q^{m-1}+1)/2})^{1-q}
\end{equation*} and so \begin{equation*}
(-1)^{(q+1)/2}=1.
\end{equation*} This can only happen when $q\equiv 3 \mod 4$.\\
	
Assume  $q \equiv 3 \mod 4$, $m$ is odd and \begin{equation*}
s_i=\begin{cases} 0 &\text{ if $i$ is odd,} \\ 2s_0^{(q^i+1)/2} &\text{ if $i$ is even}   \end{cases}
\end{equation*} and let \begin{equation*}
Q(x)=\Tr\left(x\sum_{i=0}^{m-1}s_ix^{q^i}\right).
\end{equation*}  
Then \begin{align*}
&\Tr(Q(x+y)-Q(x)-Q(y))= \Tr\left(2s_0xy+\sum_{i=1}^{(m-1)/2}s_0^{(q^{2i}+1)/2}(x^{q^{2i}}y+xy^{q^{2i}})\right) \\&=\Tr\left( s_0^{(q^m-1)/2}y^{q^m} \sum_{i=0}^{m-1}s_0^{(q^{2i+1}+1)/2}x^{q^{2i+1}}\right)=\Tr\left(s_0^{(q^m-1)/2} sy^{q^m} \sum_{i=0}^{m-1}(sx)^{q^{2i+1}}\right)	
\end{align*}
	with fixing a square root $s$ of $s_0$ in $\mathbb F_{q^{2m}}$. Since \begin{equation*}
	(sx)^{q^{2m}}-(sx)=s(x^{q^{2m}}-x)
	\end{equation*} and since \begin{equation*}
	\deg \left((x+x^3+\cdots+x^{2m-1},x^{2m}-1)\right)=2m-2,
	\end{equation*} we have the result.
\end{proof}

\textit{Remark.} The maximal curves in Corollary \ref{cor1} have genus $q^{m-1}(q-1)/2$ . By \cite[Theorem 6.12]{COJPAA} such curves are covered by the corresponding Hermitian curve. Note that subcovers of the Hermitian curves with the same genus could be also obtained using \cite[Proposition 3.1]{GSX}.

\begin{cor}\label{cor2}
Let $q\equiv 1 \mod 4$ be a power of an odd prime and 
 let $m\geq 2$ be a positive odd integer.
Let
\begin{equation*}
S(x)=s_0 x + s_1 x^q+\cdots+s_{m-1} x^{q^{m-1}}\in\F_{q^m}[x]\quad \mbox{with}\quad s_0 s_{m-1}\ne 0
\end{equation*}
where \begin{equation*}
s_i=\begin{cases} 0 &\text{ if $i$ is odd}, \\ 2s_0^{(q^i+1)/2} &\text{ if $i$ is even} \end{cases}
\end{equation*} for $i=1,\cdots,m-1$. Then the radical of the quadratic form $\Tr(xS(x))$ has dimension $2m-2$ over $\F_q$ and the curve  is a minimal curve over $\F_{q^{2m}}$.

\end{cor}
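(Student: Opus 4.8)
The plan is to reduce everything to the analysis already carried out for Corollary \ref{cor1}. First I would observe that the polynomial $S(x)$ in the statement is precisely the one appearing in the last part of the proof of Corollary \ref{cor1}: its coefficients satisfy $s_i=0$ for odd $i$ and $s_i=2s_0^{(q^i+1)/2}$ for even $i$, \emph{with no sign alternation}. Hence the computation of the radical $W$ of $Q(x)=\Tr(xS(x))$ made there applies verbatim: writing $B(x,y)=Q(x+y)-Q(x)-Q(y)$, using the Frobenius invariance of $\Tr$, and putting $u=sx$ with $s$ a fixed square root of $s_0$ in $\F_{q^{2m}}$ (which exists since $[\F_{q^{2m}}:\F_{q^m}]=2$), the condition $x\in W$ becomes $\sum_{i=0}^{m-1}(sx)^{q^{2i+1}}=0$; identifying the $q$-power Frobenius of $\F_{q^{2m}}/\F_q$ with multiplication by $X$ in $\F_q[X]/(X^{2m}-1)$ gives $\dim_{\F_q}W=\deg\gcd(x+x^3+\cdots+x^{2m-1},\,x^{2m}-1)=2m-2$. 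I would stress that this computation is insensitive to the residue of $q$ modulo $4$.

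Next I would record that, because $\dim_{\F_q}W=2m-2=2(m-1)$, $\deg S(x)=q^{m-1}$ and $s_{m-1}\ne 0$, the criterion recalled in Section \ref{sec.prelim} (from \cite{COJPAA}) forces $\mathcal{X}$ to be either maximal or minimal over $\F_{q^{2m}}$: with $g(\mathcal{X})=\frac{q-1}{2}q^{m-1}$ one has $2g(\mathcal{X})\sqrt{q^{2m}}=(q-1)q^{2m-1}=(q-1)q^{(2m+w)/2}$, which is exactly $|N(\mathcal{X})-(q^{2m}+1)|$.

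Finally I would rule out maximality. By Theorem \ref{thm1}, if $\mathcal{X}$ were maximal there would exist $c\in\F_{q^{2m}}\setminus\{0\}$ satisfying \eqref{eq1.theorem1}--\eqref{eq2.theorem1}; but the elementary manipulation in the proof of Corollary \ref{cor1}, valid for every $S$ with coefficients in $\F_{q^m}$ and $s_0s_{m-1}\ne 0$, forces, when $q\equiv 1\bmod 4$, the relations $s_i=(-1)^{i/2}\,2s_0^{(q^i+1)/2}$ at the even indices. Since $m$ is odd and $m\ge 2$, we have $m\ge 3$, so the index $i=2$ occurs, where this would give $s_2=-2s_0^{(q^2+1)/2}$, contradicting $s_2=2s_0^{(q^2+1)/2}$ (their difference $4s_0^{(q^2+1)/2}$ is nonzero in odd characteristic). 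Therefore $\mathcal{X}$ is not maximal, and by the dichotomy above it is minimal. Equivalently, one may simply invoke Corollary \ref{cor1}: for $q\equiv 1\bmod 4$ its ``if and only if'' fails, so the curve attached to the given $S$ is not maximal, hence minimal.

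The step needing the most care is not any computation---all of them were already performed for Corollary \ref{cor1}---but keeping the logic straight: the radical-dimension computation has to be used \emph{unconditionally} in $q\bmod 4$, so that the ``maximal or minimal'' dichotomy is available, whereas it is the coefficient recursion coming from Theorem \ref{thm1} that detects the obstruction to maximality, which is present precisely when $q\equiv 1\bmod 4$.
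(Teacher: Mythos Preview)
Your proposal is correct and follows essentially the same approach as the paper: reuse the radical--dimension computation from Corollary~\ref{cor1} to obtain $\dim_{\F_q}W=2m-2$, deduce the maximal/minimal dichotomy, and then exclude maximality via Corollary~\ref{cor1}. The paper's proof is just the two--line version of this; your additional argument via the $s_2$ contradiction is a valid alternative to simply citing the ``if and only if'' in Corollary~\ref{cor1}, but not needed.
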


\begin{proof}
The calculation of the dimension of $\Tr(xS(x))$ in Corollary \ref{cor1} works here too. Since its dimension is $2m-2$ and since the curve $\mathcal{X}$ is not maximal by Corollary \ref{cor1}, $\mathcal{X}$ is minimal.
\end{proof}

\begin{cor}\label{cor3}
	Let $q\equiv 1 \mod 4$ be a power of an odd prime and
	 let $m\geq 2$ be a positive even integer.
	Let
\begin{equation*}
	S(x)=s_0 x + s_1 x^q+\cdots+s_{m-1} x^{q^{m-1}}\in\F_{q^m}[x]\quad \mbox{with}\quad s_0 s_{m-1}\ne 0
\end{equation*}
	where \begin{equation*}
	s_i=\begin{cases} 0 &\text{ if $i$ is even}, \\ s_1^{(q^i+1)/(q+1)} &\text{ if $i$ is odd}   \end{cases}
	\end{equation*} for $i=0,\dots,m-1$. Then the radical of the quadratic form $\Tr(xS(x))$ has dimension $2m-2$ over $\F_q$ and the curve  is a minimal curve over $\F_{q^{2m}}$.
	
\end{cor}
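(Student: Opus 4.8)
\emph{Proof proposal.} My plan is to follow the two-step pattern of Corollaries~\ref{cor1} and~\ref{cor2}: first show that the radical $W$ of the quadratic form $Q(x)=\Tr(xS(x))$ has $\F_q$-dimension $2m-2$, and then deduce minimality of $\mathcal X$ directly from Theorem~\ref{minimal-thm} applied with $k=1$ --- rather than from a ``not maximal'' argument, since here $s_0=0$ and Corollary~\ref{cor1} does not apply.

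For the dimension I would proceed exactly as in the proof of Corollary~\ref{cor1}. Expanding $B(x,y)=Q(x+y)-Q(x)-Q(y)$ and using $\Tr(\alpha^{q^k})=\Tr(\alpha)$ as in Lemma~\ref{W} to rewrite it as $\Tr\!\big(y\,M(x)\big)$, one gets
\[
M(x)=\sum_{\substack{1\le i\le m-1\\ i\ \mathrm{odd}}}\Big(s_i^{\,q^{-i}}x^{q^{-i}}+s_i\,x^{q^i}\Big),\qquad W=\{x\in\F_{q^{2m}}:M(x)=0\}.
\]
Since $s_i=s_1^{(q^i+1)/(q+1)}=c^{\,q^i+1}$ for a $c\in\F_{q^{2m}}$ with $c^{q+1}=s_1$ (the exact analogue of the use of $\sqrt{s_0}$ in Corollary~\ref{cor1}), one has $M(x)=c\sum_i\big((cx)^{q^i}+(cx)^{q^{-i}}\big)$, so after the substitution $x\mapsto c^{-1}x$ the radical is carried onto the kernel on $\F_{q^{2m}}$ of the $\F_q$-linearized polynomial $\Lambda(x)=\sum_{j\ \mathrm{odd},\,1\le j\le 2m-1}x^{q^j}$: the terms $x^{q^{-i}}$ with $1\le i\le m-1$ odd become $x^{q^{2m-i}}$, and since $m$ is even these, together with the $x^{q^i}$, run through exactly the odd exponents between $1$ and $2m-1$. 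The companion ordinary polynomial is $\ell(X)=X+X^3+\cdots+X^{2m-1}=X\,(X^{2m}-1)/(X^2-1)$; as $\gcd(X,X^{2m}-1)=1$ while $(X^{2m}-1)/(X^2-1)$ divides $X^{2m}-1$, the $\F_q$-dimension of $W$, equal to $\deg\gcd(\ell(X),X^{2m}-1)$, is $\deg\big((X^{2m}-1)/(X^2-1)\big)=2m-2$; this is the very same gcd computation that closes the proof of Corollary~\ref{cor1}.

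Granting $\dim_{\F_q}W=2m-2$, minimality follows at once. Writing $S(x)=s_1x^{q}+s_2x^{q^2}+\cdots+s_{m-1}x^{q^{m-1}}$, the coefficient $s_1$ is nonzero by hypothesis, and therefore so is $s_{m-1}=s_1^{(q^{m-1}+1)/(q+1)}$ (here $m-1$ is odd); moreover $m\ge 2=2\cdot1$, and the radical of $\Tr(xS(x))$ has dimension $2m-2=2m-2\cdot1$. Theorem~\ref{minimal-thm}, with $k=1$, then states precisely that the curve $\mathcal X:y^q-y=xS(x)$ is minimal over $\F_{q^{2m}}$.

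The whole difficulty is concentrated in the dimension computation, and there the main obstacle is twofold. First is the Frobenius-exponent bookkeeping after the normalization $x\mapsto c^{-1}x$: one must check carefully which powers of $q$ actually survive in $M$, so as to recognize the clean companion polynomial $X+X^3+\cdots+X^{2m-1}$ and hence the value $2m-2$ of the gcd (this is the odd-index analogue of the even-index pattern handled in Corollary~\ref{cor1}). Second is the verification --- parallel to $\sqrt{s_0}\in\F_{q^{2m}}$ in Corollary~\ref{cor1} --- that a $(q+1)$-th root $c$ of $s_1$ can be taken inside $\F_{q^{2m}}$, which is what legitimises the scaling step over the ground field; once this is in place, the rest is routine.
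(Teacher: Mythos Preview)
Your proposal follows essentially the same route as the paper's own proof: both introduce a $(q+1)$-th root of $s_1$ in $\F_{q^{2m}}$ to normalize, reduce the radical computation to the $\F_q$-linearized polynomial with companion $X+X^3+\cdots+X^{2m-1}$, and read off $\dim_{\F_q}W=2m-2$ from the same $\gcd$ with $X^{2m}-1$. The only difference is cosmetic---you make the final minimality step explicit via Theorem~\ref{minimal-thm} with $k=1$, whereas the paper just writes ``we have the result'' after the dimension count.
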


\begin{proof}
 Let \begin{equation*}
 Q(x)=\Tr\left(x\sum_{i=0}^{m-1}s_ix^{q^i}\right).
 \end{equation*} Then \begin{align*}
 &\hspace{-5mm}\Tr(Q(x+y)-Q(x)-Q(y)= \Tr\left(\sum_{i=1}^{m/2}s_1^{(q^{2i-1}+1)/(q+1)}(x^{q^{2i-1}}y+xy^{q^{2i-1}})\right)\\&\hspace{-5mm}=\Tr\left( s_1^{(q^m-1)/(q+1)}y^{q^m} \sum_{i=0}^{m-1}s_1^{(q^{2i+1}+1)/(q+1)}x^{q^{2i+1}}\right)=\Tr\left(s_1^{(q^m-1)/(q+1)} sy^{q^m} \sum_{i=0}^{m-1}(sx)^{q^{2i+1}}\right)
 \end{align*}
with fixing a $(q+1)$-th root of $s_1$ in $\mathbb F_{q^{2m}}$, we called it $s$. Since \begin{equation*}
(sx)^{q^{2m}}-(sx)=s(x^{q^{2m}}-x)
\end{equation*} and since \begin{equation*}
\deg \left((x+x^3+\cdots+x^{2m-1},x^{2m}-1)\right)=2m-2,
\end{equation*} we have the result.
\end{proof}

\begin{remark}
Corollary \ref{cor1} and Corollary \ref{cor2} are true when $m=1$.
The proof can be found in \cite{MY} (Lemma 5)
where $p$ can be replaced by $q$ and $x^2$ can be replaced by $s_0 x^2$
for $s_0\in \F_q$.
\end{remark}

\section{Explicit curves using cyclotomic polynomials}

Assume that $d$ is not divisible by the characteristic of $\F_q$.
The $d$-th cyclotomic polynomial $\Phi_d(x)$ over $\F_q$ is defined as 
\begin{equation*}
\Phi_d(x)=\prod\limits_{\substack{s=1\\ \gcd(s,d)=1}}^{d}(x-\xi^s),
\end{equation*}
where $\xi$ is a primitive $d$th root of unity over $\F_q$. In particular $\Phi_d(x)$ is always a divisor of $x^d -1$, but not necessarily irreducible over $\F_q$.
The following are well-known results about cyclotomic polynomials (see, for example \cite{LN}).

\begin{lemma}
	The coefficients of the cyclotomic polynomial $\Phi_d(x)$ are 
	in $\F_p$ for all $d \ge 1$ with $\gcd (d,p)=1$.
\end{lemma}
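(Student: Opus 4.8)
The plan is to prove that the coefficients of $\Phi_d(x)$ lie in $\F_p$ by induction on $d$, using the fundamental factorization $x^d - 1 = \prod_{e \mid d} \Phi_e(x)$ together with the fact that $\Phi_d$ is the unique monic polynomial satisfying this identity once the $\Phi_e$ for $e < d$ are known. First I would record the base case $\Phi_1(x) = x - 1 \in \F_p[x]$, which is immediate. For the inductive step, assume $\Phi_e(x) \in \F_p[x]$ for every proper divisor $e$ of $d$; then the product $g(x) = \prod_{e \mid d,\, e < d} \Phi_e(x)$ is a monic polynomial with coefficients in $\F_p$, and it divides $x^d - 1$ in $\overline{\F_q}[x]$. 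The key point is that $x^d - 1$ is separable over $\F_q$ (here $\gcd(d,p) = 1$ is used, so its derivative $d x^{d-1}$ is coprime to it), hence $g(x)$ divides $x^d - 1$ with quotient exactly $\Phi_d(x)$, that is $\Phi_d(x) = (x^d - 1)/g(x)$.

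The decisive observation is then that polynomial division of $x^d - 1$ by the monic polynomial $g(x)$ can be carried out entirely within $\F_p[x]$: dividing a polynomial with coefficients in a field $K$ by a monic polynomial with coefficients in a subfield $K_0$ produces a quotient and remainder with coefficients in $K_0$, since the division algorithm only ever multiplies, subtracts, and uses the leading coefficient $1$ of the divisor. Because both $x^d - 1$ and $g(x)$ have coefficients in $\F_p$, the quotient $\Phi_d(x)$ does too. I would phrase this as: the quotient computed in $\overline{\F_q}[x]$ agrees with the quotient computed in $\F_p[x]$ by uniqueness of the division algorithm, and the latter visibly has $\F_p$-coefficients.

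I do not expect a genuine obstacle here; the statement is classical and the argument is short. The only point requiring mild care is making sure the induction is set up so that $\Phi_d(x)$ is correctly identified as $(x^d-1)/\prod_{e\mid d,\,e<d}\Phi_e(x)$ — this rests on the standard identity $x^d - 1 = \prod_{e \mid d}\Phi_e(x)$, which holds over any field in which $x^d - 1$ is separable, and on the separability itself, which is where the hypothesis $\gcd(d,p) = 1$ enters. One could alternatively invoke Galois descent — the Frobenius automorphism $\sigma: a \mapsto a^p$ of $\overline{\F_p}$ permutes the primitive $d$th roots of unity (again because $\gcd(d,p)=1$ guarantees these roots are distinct and their orders are preserved under raising to the $p$th power), hence fixes $\Phi_d(x)$ coefficientwise, so $\Phi_d(x) \in \F_p[x]$ — but the inductive division argument is the most self-contained and is the one I would write out.
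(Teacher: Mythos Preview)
Your argument is correct. The paper, however, does not supply its own proof of this lemma: it simply labels the result as ``well-known'' and refers the reader to Lidl--Niederreiter \cite{LN}. Your induction via the identity $x^d-1=\prod_{e\mid d}\Phi_e(x)$ together with monic polynomial division in $\F_p[x]$ is precisely the standard textbook proof (and is essentially what one finds in \cite{LN}), so there is nothing to compare beyond noting that you have filled in what the paper leaves to a citation. The alternative Frobenius-fixes-the-roots argument you mention is equally valid and equally standard.
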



\begin{lemma} \label{symmetry}
	Let $d>1$ 
be relatively prime to $p$,	
and set $\Phi_d(x)=\sum\limits_{k=0}^{\phi(d)}a_kx^k$. Then $a_{\phi(d)-i}=a_i$ for all $0 \le i \le \phi(d)$.
\end{lemma}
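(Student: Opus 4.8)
The plan is to show that, for $d>1$, the polynomial $\Phi_d$ equals its own reciprocal, which is precisely the asserted symmetry of coefficients. Fix an algebraic closure of $\F_q$; since $\gcd(d,p)=1$ the polynomial $x^d-1$ is separable, so $\Phi_d(x)=\prod_{\zeta}(x-\zeta)$, the product running over the $\phi(d)$ distinct primitive $d$-th roots of unity $\zeta$. In particular $\Phi_d$ is monic, so $a_{\phi(d)}=1$.

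The first step I would carry out is to evaluate the constant term $a_0=\Phi_d(0)=\prod_{\zeta}(-\zeta)$. Group the primitive $d$-th roots of unity into pairs $\{\zeta,\zeta^{-1}\}$ with $\zeta\ne\zeta^{-1}$, each such pair contributing $(-\zeta)(-\zeta^{-1})=1$ to the product, together with those roots fixed by inversion. A fixed root satisfies $\zeta^2=1$; for $d>1$ this forces $\zeta=-1$ and $d=2$, in which case the single leftover factor is $-(-1)=1$. Hence $a_0=1$. This is the only place where the hypothesis $d>1$ is used: for $d=1$ the fixed root $\zeta=1$ gives $a_0=-1$ and the statement indeed fails.

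Next I would introduce the reciprocal polynomial $x^{\phi(d)}\Phi_d(1/x)=\sum_{k=0}^{\phi(d)}a_k x^{\phi(d)-k}$ and compute it through the roots:
\[
x^{\phi(d)}\Phi_d(1/x)=\prod_{\zeta}(1-x\zeta)=\Bigl(\prod_{\zeta}(-\zeta)\Bigr)\prod_{\zeta}\bigl(x-\zeta^{-1}\bigr)=a_0\prod_{\zeta}\bigl(x-\zeta^{-1}\bigr).
\]
Since inversion permutes the set of primitive $d$-th roots of unity, $\prod_{\zeta}(x-\zeta^{-1})=\prod_{\zeta}(x-\zeta)=\Phi_d(x)$; combined with $a_0=1$ this gives $x^{\phi(d)}\Phi_d(1/x)=\Phi_d(x)$. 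Comparing the coefficient of $x^i$ on the two sides yields $a_{\phi(d)-i}=a_i$ for all $0\le i\le\phi(d)$, as claimed.

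The argument is short and essentially self-contained; the only point needing a little care is the computation of $a_0$, i.e.\ the fact that the product of all primitive $d$-th roots of unity is $1$ for $d>1$, including the edge cases $d=2$ and $p=2$. Alternatively one could invoke the classical identity $\Phi_d(0)=1$ for $d>1$, or deduce the result from the symmetry of the integral cyclotomic polynomial together with the previous lemma, but the root-pairing computation above delivers it directly.
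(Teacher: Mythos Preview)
The paper does not actually prove this lemma: it is stated as a well-known fact about cyclotomic polynomials with a reference to Lidl--Niederreiter, so there is no argument to compare against. Your proof is correct and is the standard one---showing that $\Phi_d$ is self-reciprocal via the observation that inversion permutes the primitive $d$-th roots of unity, together with the computation $\Phi_d(0)=1$ for $d>1$ via the pairing $\zeta\leftrightarrow\zeta^{-1}$. The edge cases are handled cleanly, and nothing is missing.
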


If $\Phi_d(x)=\sum\limits_{k=0}^{\phi(d)}a_kx^k$ then we define 
$\varphi_d(x)=\sum\limits_{k=0}^{\phi(d)}a_kx^{q^k}$.

\begin{thm}\label{cyclotomic.main.thm}
	Let $k$ be a positive even integer and $d$ be a positive divisor of $k$ which is bigger than $2$. Then the curve \begin{equation*}
	\mathcal{X}:y^q-y=x\sum\limits_{j=0}^{\frac{n}{2k}-1} \varphi_d(x)^{q^{a+kj}}
	\end{equation*} is minimal over $\mathbb F_{q^n}$ where $n$ divisible by $2k$ and $\phi(d)+2a=k$.
\end{thm}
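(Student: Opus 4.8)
The plan is to deduce the statement from Theorem~\ref{minimal-thm}. Write $t=n/(2k)$ and $S(x)=\sum_{j=0}^{t-1}\varphi_d(x)^{q^{a+kj}}$, so $\mathcal X$ is $y^q-y=xS(x)$, and write $\Phi_d(X)=\sum_{\ell=0}^{\phi(d)}a_\ell X^\ell$ with $a_\ell\in\F_p$. I will apply Theorem~\ref{minimal-thm} with its ``$m$'' taken to be $n/2$ and its ``$k$'' taken to be $a$; for this I must (i) put $S$ in the shape $s_ax^{q^a}+\cdots+s_{n/2-a}x^{q^{n/2-a}}$ with $s_as_{n/2-a}\ne0$, $a\ge1$ and $n/2\ge 2a$, and (ii) show that the radical $W$ of $Q(x)=\Tr(xS(x))$ has $\F_q$-dimension $n-2a=2\cdot\tfrac n2-2a$. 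Point (i) is immediate: each $a_\ell\in\F_p$, so raising $\varphi_d$ to a power $q^{a+kj}$ merely shifts $q$-exponents, and $\phi(d)+2a=k$ makes the ``blocks'' $[a+kj,\,k-a+kj]$, $0\le j\le t-1$, pairwise disjoint. Since $d>2$, $\phi(d)$ is even (so $a=\tfrac12(k-\phi(d))$ is an integer), and $\phi(d)\le d-1\le k-1$ together with this parity forces $\phi(d)\le k-2$, hence $a\ge1$ and $n/2\ge k>2a$. Consequently $S(x)=\sum_i s_ix^{q^i}$ with $s_i\in\F_p$ supported in $[a,\,n/2-a]$, these $s_i$ being the coefficients of $X^a\Phi_d(X)\Psi(X)$, where $\Psi(X)=\sum_{j=0}^{t-1}X^{kj}=\frac{X^{n/2}-1}{X^k-1}$ is a polynomial ($k\mid n/2$); in particular $s_a=a_0$ and $s_{n/2-a}=a_{\phi(d)}$, both equal to $1$ by Lemma~\ref{symmetry} and the monicity of $\Phi_d$.

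For (ii) I will convert the radical into the degree of a polynomial gcd. Let $\pi\colon x\mapsto x^q$ be the Frobenius of $\F_{q^n}$. Since all $s_i$ lie in $\F_q$, the coefficients of $S$ commute with $\pi$, so $x\mapsto S(x)$ equals the operator $\tilde S(\pi)$ with $\tilde S(X)=\sum_i s_iX^i=X^a\Phi_d(X)\Psi(X)$. The computation in the proof of Lemma~\ref{W} gives $B(u,v)=\Tr\!\big(v\,g(u)\big)$ with $g(u)=\sum_i(s_iu)^{q^{-i}}+\sum_i s_iu^{q^i}$, and for $s_i\in\F_q$ this is $\big(\tilde S(\pi)+\tilde S(\pi^{-1})\big)(u)$, an $\F_q$-linear operator on $\F_{q^n}$ (with $\pi^{-1}=\pi^{n-1}$); non-degeneracy of the trace form then gives $W=\ker\!\big(\tilde S(\pi)+\tilde S(\pi^{-1})\big)$. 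By the normal basis theorem, $\F_{q^n}\cong R:=\F_q[X]/(X^n-1)$ as $\F_q[\pi]$-modules (with $\pi\leftrightarrow X$), so $\tilde S(\pi)+\tilde S(\pi^{-1})$ becomes multiplication by the element $P(X):=\tilde S(X)+\tilde S(X^{-1})\in R$ ($X$ being a unit of $R$), and therefore $\dim_{\F_q}W=\deg\gcd\!\big(P(X),X^n-1\big)$ — an identity valid in $R$ even when $X^n-1$ is inseparable.

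It remains to evaluate the gcd. Since $\Phi_d$ is palindromic (Lemma~\ref{symmetry}), $\Phi_d(X^{-1})=X^{-\phi(d)}\Phi_d(X)$, and $\Psi(X^{-1})=X^{\,k-n/2}\Psi(X)$; substituting and using $\phi(d)=k-2a$ gives $\tilde S(X^{-1})=X^{\,a-n/2}\Phi_d(X)\Psi(X)$. Multiplying $P$ by the unit $X^{\,n/2-a}$ of $R$ (which does not change the degree of the gcd) gives
\[
X^{\,n/2-a}P(X)=\bigl(X^{n/2}+1\bigr)\Phi_d(X)\Psi(X)=\Phi_d(X)\,\frac{X^n-1}{X^k-1}.
\]
Writing $X^n-1=(X^k-1)\cdot\frac{X^n-1}{X^k-1}$ and using $\gcd(AB,AC)=A\gcd(B,C)$ together with $\Phi_d(X)\mid X^d-1\mid X^k-1$ (as $d\mid k$), we obtain
\[
\dim_{\F_q}W=\deg\frac{X^n-1}{X^k-1}+\deg\gcd\!\big(\Phi_d(X),X^k-1\big)=(n-k)+\phi(d)=n-2a,
\]
which is (ii). Theorem~\ref{minimal-thm} now applies and shows $\mathcal X$ is minimal over $\F_{q^n}$ (the hypothesis that $q$ is odd enters only through that theorem).

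The step I expect to be the main obstacle is the reduction in (ii): passing from ``kernel of a $q$-linearized operator on $\F_{q^n}$'' to ``degree of a gcd in $\F_q[X]/(X^n-1)$'' must be set up with care, using both the fact that the coefficients of $S$ lie in $\F_q$ (so that $\pi$ and the coefficients commute and $\tilde S(\pi)+\tilde S(\pi^{-1})$ really is an element of $\F_q[\pi]$) and the normal basis theorem; one should note that only the module isomorphism $\F_{q^n}\cong\F_q[X]/(X^n-1)$ is used, so the possible inseparability of $X^n-1$ (when $p\mid n$) is harmless. After that the argument is formal, driven by the palindromic symmetry of $\Phi_d$ and the divisibility $\Phi_d\mid X^k-1$, which together cancel the factor $X^{n/2}-1$ and leave exactly the dimension $n-2a$.
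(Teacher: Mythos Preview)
Your proof is correct and follows essentially the same approach as the paper's: identify the radical $W$ via Lemma~\ref{W}, pass to the associated polynomial $\Phi_d(X)\dfrac{X^n-1}{X^k-1}$, read off $\dim_{\F_q}W=n-k+\phi(d)=n-2a$ from the gcd with $X^n-1$, and then invoke Theorem~\ref{minimal-thm}. You supply considerably more detail than the paper does --- in particular the normal basis reduction $\F_{q^n}\cong\F_q[X]/(X^n-1)$, the explicit palindromic computation giving $X^{n/2-a}P(X)=\Phi_d(X)\frac{X^n-1}{X^k-1}$, and the verification that $a\ge1$, $n/2\ge 2a$, and $s_as_{n/2-a}\ne0$ so that Theorem~\ref{minimal-thm} actually applies --- but the structure is identical.
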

\begin{proof}
By Lemma \ref{W} we have\begin{equation*}
 W	=\left\{ x \in \mathbb F_{q^n} \; | \; \sum_{j=0}^{\frac{n}{k}-1}(\varphi(x))^{q^{kj}}=0 \right\}.
\end{equation*} Therefore the corresponding associated polynomial to $\sum_{j=0}^{\frac{n}{k}-1}(\varphi(x))^{q^{kj}}$
	is 
	\begin{equation*}
	\Phi_d(x)(1+x^k+\cdots+x^{n-k})
	\end{equation*}	and $\deg\gcd \left(\Phi_d(x)(1+x^k+\cdots+x^{n-k}),x^n-1\right)=n-k+\phi(d)=n-2a.$ 
	Now the result follows from Theorem \ref{minimal-thm}.  
\end{proof}

\begin{remark}\label{remark1}
Here we remark that Theorem \ref{cyclotomic.main.thm} includes the explicit classes of minimal curves given in \cite[Theorem 3.4 and Theorem 3.5]{OS4}. If we use $\phi_2(x)=x^2 - x + 1$, that is, $\varphi_2(x)=x^{q^2} - x^{q} + x$, then Theorem \ref{cyclotomic.main.thm} reduces to \cite[Theorem 3.4]{OS4}. Furthermore, if we use $\phi_4(x)=x^2 + x + 1$, that is, $\varphi_4(x)=x^{q^2} + x^{q} + x$, then Theorem \ref{cyclotomic.main.thm} reduces to \cite[Theorem 3.5]{OS4}.
\end{remark}

\begin{thm}\label{cyclotomic2.thm}
	Let $k$ be a positive even integer and $d\geq 2$ a divisor of $k$. Then the curve \begin{equation*}
	\mathcal{X}:y^q-y=x\sum\limits_{j=0}^{\frac{n-k}{2k}-1} \varphi_d(x)^{q^{a+kj}}+x\sum_{i=0}^{\frac{\phi(d)}{2}-1} c_ix^{q^{k-a-i}}+ \frac{c_{\phi(d)/2}}{2}x^2
	\end{equation*} is minimal over $\mathbb F_{q^{2n}}$ where $n\equiv k \mod 2k$, $n>k$ and $\phi(d)+2a=2k$.
\end{thm}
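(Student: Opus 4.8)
The plan is to run the argument of the proof of Theorem~\ref{thm1} — reduction to a curve in ``boundary'' position via \cite[Proposition~5.1]{COJPAA} — but with Theorem~\ref{minimal-thm} in place of Proposition~\ref{prop1}, the radical computation being handled by the cyclotomic bookkeeping of the proof of Theorem~\ref{cyclotomic.main.thm}. Write $S(x)\in\F_{q^{2n}}[x]$ for the $\F_q$-linearized polynomial with $xS(x)$ equal to the right-hand side of the curve equation, and record the numerology: $\phi(d)+2a=2k$, so $\phi(d)/2=k-a$; and $n=k(1+2\ell)$ with $\ell\ge1$, so that $2k\mid 2n$ while $4k\nmid 2n$. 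Note $s_0=c_{\phi(d)/2}/2\ne0$, so $S$ has a nonzero term of $q$-degree $0$ and Theorem~\ref{minimal-thm} does not apply to $\mathcal{X}$ as it stands.

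The first step is to pass, exactly as in the proof of Theorem~\ref{thm1}, to the function field $\F_{q^{2n}}(z,t)$ with $t^q-t=zR(z)$, where $R(x)=cS(x^q+cx)+D(x)+cs_0x^q$ and $D(x)^q=S(x^q+cx)-cs_0x$ for a suitable $c\in\F_{q^{2n}}\setminus\{0\}$; by \cite[Proposition~5.1]{COJPAA} this field is minimal over $\F_{q^{2n}}$ if and only if $\mathcal{X}$ is, and the identity $s_0=c_{\phi(d)/2}/2$ is precisely what makes $D$ a linearized polynomial. Expanding, the coefficient of $x^{q^j}$ in $R$ is an explicit expression in $c$ and $s_{j-1},s_j,s_{j+1}$, in the spirit of the recursions~\eqref{eq1.theorem1}. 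The content of the statement is that, with the prescribed correction terms $\sum_{i=0}^{\phi(d)/2-1}c_ix^{q^{k-a-i}}+\frac{c_{\phi(d)/2}}{2}x$ added to the cyclotomic part $\sum_j\varphi_d(x)^{q^{a+kj}}$ and with $c$ chosen suitably, all these coefficients vanish in $q$-degrees below some threshold $K$, so that $R$ takes the centered form $R(x)=r_Kx^{q^K}+\cdots+r_{n-K}x^{q^{n-K}}$ with $r_Kr_{n-K}\ne0$ and $n\ge2K$ to which Theorem~\ref{minimal-thm} applies over $\F_{q^{2n}}$, and moreover $R$ is again of cyclotomic type.

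The second step is to check the radical hypothesis of Theorem~\ref{minimal-thm} for the curve $t^q-t=zR(z)$. By Lemma~\ref{W} the radical $W$ of $\Tr(zR(z))$ is the zero set in $\F_{q^{2n}}$ of an explicit $\F_q$-linearized polynomial; since the coefficients $a_\ell$ of $\Phi_d$ lie in $\F_p$ they are unaffected by the Frobenius twists appearing there, so $W$ is governed by the associated ordinary polynomial modulo $x^{2n}-1$. As in the proof of Theorem~\ref{cyclotomic.main.thm}, this polynomial is $\Phi_d(x)$ times a geometric progression in $x^k$ (Lemma~\ref{symmetry} being used to match the ``reflected'' part), and $\deg\gcd$ of it with $x^{2n}-1$ — obtained by combining the order of $\Phi_d$, a divisor of $k$, with the geometric factor — works out to $2n-2K$. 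Theorem~\ref{minimal-thm} then gives minimality of $t^q-t=zR(z)$, hence of $\mathcal{X}$, over $\F_{q^{2n}}$.

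The main obstacle is the bookkeeping in the first step: verifying that the substitution $x\mapsto x^q+cx$ carries the sparse, cyclotomically structured $S$ onto an $R$ that is again of this structured type and has centered support — i.e.\ pinning down the admissible $c$ (hence the $c_i$), checking the non-vanishing $r_Kr_{n-K}\ne0$, and making the $\gcd$ with $x^{2n}-1$ come out to exactly $2n-2K$, where the degree count must balance the cyclotomic factor $\Phi_d$ against the geometric one and the hypothesis $n\equiv k\pmod{2k}$ (equivalently $4k\nmid 2n$) enters. Once the shape of $R$ is established, the $\gcd$ computation and the appeal to Theorem~\ref{minimal-thm} are routine.
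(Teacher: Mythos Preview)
Your proposed route is very different from the paper's and substantially harder; moreover, the step you yourself label ``the main obstacle'' is a genuine gap, not just bookkeeping. The paper does not touch the substitution machinery of \cite[Proposition~5.1]{COJPAA} at all. Instead it exploits a base-change trick: since the coefficients of $S(x)$ lie in $\F_p$, one may first regard $\mathcal{X}$ as a curve over the \emph{smaller} field $\F_{q^n}$. There Lemma~\ref{W} identifies the radical $W\subset\F_{q^n}$ with the kernel of the linearized polynomial associated to $\Phi_d(x)(1+x^k+\cdots+x^{n-k})$, and one reads off $\dim_{\F_q}W=n-k+\phi(d)$. This equals twice the top $q$-degree of $S$, so $\mathcal{X}$ is either maximal or minimal over $\F_{q^n}$ --- and one does not need to decide which. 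The punchline is then the standard fact that any curve maximal or minimal over $\F_{q^n}$ is automatically \emph{minimal} over $\F_{q^{2n}}$ (its Frobenius eigenvalues are $\pm q^{n/2}$, so their squares are all $+q^n$). That is the whole argument.

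Your plan, by contrast, stays over $\F_{q^{2n}}$ and tries to push $S$ into the centred window of Theorem~\ref{minimal-thm} via $x\mapsto x^q+cx$. But \cite[Proposition~5.1]{COJPAA} only supplies \emph{some} $c$ making $D$ a polynomial; it gives no control over the shape of $R$. Your assertion that ``the content of the statement is that \ldots\ $R$ takes the centered form $r_Kx^{q^K}+\cdots+r_{n-K}x^{q^{n-K}}$'' is not something the statement encodes --- the correction terms $c_i$ in the theorem were calibrated so that the radical computation over $\F_{q^n}$ comes out symmetric, not so that your $R$ over $\F_{q^{2n}}$ becomes centred and cyclotomic. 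To carry out your strategy you would have to exhibit $c$ explicitly, verify the vanishing of each low coefficient of $R$, and check that the surviving part is again of cyclotomic type with the right $\gcd$; none of this is done, and there is no evident reason it should succeed. The paper's descent-to-$\F_{q^n}$ idea bypasses all of it in two lines.
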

\begin{proof}
By Lemma \ref{W} we have \begin{equation*}
 W=\left\{ x \in \mathbb F_{q^n} \; | \; \sum_{j=0}^{\frac{n}{k}-1}(\varphi(x))^{q^{kj}}=0 \right\}.
\end{equation*} Therefore the corresponding associated polynomial to $\sum_{j=0}^{\frac{n}{k}-1}(\varphi(x))^{q^{kj}}$
	is 
\begin{equation*}
	\Phi_d(x)(1+x^k+\cdots+x^{n-k})
\end{equation*}
	and $\deg\gcd \left(\Phi_d(x)(1+x^k+\cdots+x^{n-k}),x^n-1\right)=n-k+\phi(d)=n-k+\phi(d).$   Since $W \subset \mathbb F_{q^n}$ and the dimension of $W$ over $\mathbb F_q$ is even, $\mathcal{X}$ is maximal or minimal over $\F_{q^n}$ and hence it is minimal over $\mathbb F_{q^{2n}}$.
\end{proof}
\begin{remark}
	Here we remark that Theorem \ref{cyclotomic2.thm} includes the explicit classes of minimal curves given in \cite[Theorem 3.7 and Theorem 3.8]{OS4}. Similar to Remark \ref{remark1} if we use $\phi_2(x)=x^2 - x + 1$ and $\phi_4(x)=x^2 + x + 1$, then Theorem \ref{cyclotomic2.thm} reduces to the minimal curves given in \cite[Theorem 3.7]{OS4} and \cite[Theorem 3.8]{OS4} respectively.
\end{remark}

\section{Some generalizations}

In the previous section, the proofs work for divisors of $x^k-1$ that are
symmetric in the coefficients but are not necessarily cyclotomic polynomials.
Therefore, in the following theorems we start from divisors of $x^k-1$, where $k\geq 2$ divides $n$. We consider an integer $r\geq 1$ and 
\begin{equation}\label{Requests_f}
f(x)=\sum_{i=0}^{2r}a_i x^i\in \mathbb{F}_q[x], \quad \text{where } f(x) \mid x^{k}-1, \text{ and }  a_{r-i}=a_{r+i} \ \forall \ i=1,\ldots, r.
\end{equation}

\begin{thm}\label{Th:General2}
Let $n \geq 2$ be even and $k \equiv 2 \pmod 4$  a divisor of $n/2$. Let 
\begin{equation*}
G(x)=\sum_{i=1}^{r}a_{r+i}x^{q^{k/2-i}}+ a_r x^{q^{k/2}}+\sum_{i=1}^{r}a_{r+i}x^{q^{k/2+i}}.
\end{equation*}
Then the curve $\mathcal{X}_{f,k}$  defined by the affine equation $y^q-y=x\sum_{j=0}^{\frac{n}{2k}-1} G(x)^{q^{jk}}$  is  minimal over $\mathbb F_{q^n}$.
\end{thm}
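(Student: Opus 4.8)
The plan is to imitate the structure of the proof of Theorem~\ref{cyclotomic.main.thm}, since the hypotheses here are the natural generalization obtained by replacing the cyclotomic polynomial $\Phi_d(x)$ by an arbitrary symmetric divisor $f(x)$ of $x^k-1$. First I would identify the curve with a quadratic form: set $S(x)=\sum_{j=0}^{n/(2k)-1} G(x)^{q^{jk}}$, so that $\mathcal{X}_{f,k}$ has the form $y^q-y=xS(x)$ and is associated, as in Section~\ref{sec.prelim}, with the quadratic form $Q(x)=\Tr(xS(x))$ from $\F_{q^n}$ to $\F_q$. The key computation is to apply Lemma~\ref{W} to determine the radical $W$ of $Q$. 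Because $G(x)$ is the $\F_q$-linearized polynomial whose coefficients mirror those of $f$, and the outer sum over $j$ runs over a complete set of shifts by $q^k$, the linearized polynomial produced by Lemma~\ref{W} should collapse (after collecting terms and using $\Tr$-invariance under $q$-th powers, exactly as in the model proofs) to the associated polynomial
\begin{equation*}
f(x)\bigl(1+x^{k}+x^{2k}+\cdots+x^{n-k}\bigr)\in\F_q[x],
\end{equation*}
read as a $q$-linearized polynomial in the usual dictionary $x^{\ell}\leftrightarrow x^{q^{\ell}}$. Thus $W$ is the set of roots in $\F_{q^n}$ of this polynomial.

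Next I would compute the $\F_q$-dimension $w$ of $W$, which equals $\deg\gcd\!\bigl(f(x)(1+x^{k}+\cdots+x^{n-k}),\,x^{n}-1\bigr)$. Since $f(x)\mid x^k-1\mid x^n-1$, and $1+x^k+\cdots+x^{n-k}=(x^n-1)/(x^k-1)$, the greatest common divisor with $x^n-1$ works out to $f(x)\cdot\frac{x^n-1}{x^k-1}$ up to the degree count, giving $w=\deg f + (n-k)=2r+n-k$. One then checks that $w$ is even: here $k\equiv 2\pmod 4$ and the parity of $n$ together with $\deg f=2r$ force $w$ even, which is exactly the regime in which Theorem~\ref{minimal-thm}-style reasoning applies, namely $\mathcal{X}$ is maximal or minimal over $\F_{q^n}$. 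Finally, to decide between maximal and minimal, I would argue as in the proof of Theorem~\ref{cyclotomic.main.thm}: rewrite $G(x)$ after normalizing by a suitable root (a square root, or an appropriate $(q+1)$-th root, of a leading coefficient of $f$) so that the relevant ``self-dual'' shape $c S(c^{-1}x)+\dots$ does \emph{not} satisfy the maximality criterion of Proposition~\ref{prop1} (the obstruction being a sign of the form $(-1)^{?}=-1$, as in Corollary~\ref{cor1}), which rules out maximality and hence forces minimality.

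The main obstacle I anticipate is precisely the last step: verifying that the curve is minimal rather than maximal. The dimension computation via Lemma~\ref{W} and the gcd bookkeeping are routine once the indexing in the double sum is handled carefully (the shift $q^{k/2}$ in $G$ together with $k\equiv 2\pmod 4$ is what makes $k/2$ odd and drives the sign), but pinning down the maximal-versus-minimal dichotomy requires either invoking a structural fact (e.g.\ that $f$ being a divisor of $x^k-1$ with $k\equiv 2\pmod 4$ forces the associated Artin--Schreier curve into the minimal class, analogously to the $q\equiv1\bmod4$ cases of Corollaries~\ref{cor2} and~\ref{cor3}) or reducing, via \cite[Proposition~5.1]{COJPAA} as in Theorem~\ref{thm1}, to Proposition~\ref{prop1} and exhibiting the failure of the maximality equations. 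I would expect the cleanest route to be the reduction to the already-proved minimal cases: since Theorem~\ref{cyclotomic.main.thm} covers $f=\Phi_d$ and its proof only used symmetry of the coefficients and divisibility of $x^k-1$, essentially the same argument goes through verbatim for general symmetric $f$, and this is the observation flagged in the opening sentence of Section~\ref{sec.main} (``the proofs work for divisors of $x^k-1$ that are symmetric in the coefficients but are not necessarily cyclotomic polynomials'').
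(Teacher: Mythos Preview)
Your radical computation is correct and matches the paper: applying Lemma~\ref{W} to $S(x)=\sum_{j=0}^{n/(2k)-1}G(x)^{q^{jk}}$ produces the linearized polynomial whose associated ordinary polynomial is $f(x)(1+x^k+\cdots+x^{n-k})$, and the $\gcd$ with $x^n-1$ has degree $w=n-k+2r$, which equals $2m$ where $m=n/2-k/2+r$ is the top exponent in $S(x)$. So the curve is maximal or minimal over $\F_{q^n}$.

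Where you diverge from the paper is the last step. You treat ``maximal versus minimal'' as an open obstacle requiring a sign computation through Proposition~\ref{prop1} or a normalization argument in the style of Corollaries~\ref{cor1}--\ref{cor3}. But re-read Theorem~\ref{minimal-thm}: it does not merely assert ``maximal or minimal''; it asserts \emph{minimality} outright, whenever $S(x)$ has the shape $s_a x^{q^a}+\cdots+s_{m-a}x^{q^{m-a}}$ with $a\ge 1$ and the radical has dimension $2m-2a$. Here the lowest exponent in $S(x)$ is $a=k/2-r\ge 1$ (since $\deg f=2r<k$: the symmetry $a_0=a_{2r}\neq 0$ rules out $f=x^k-1$ in odd characteristic), the highest exponent is $m-a=n/2-k/2+r$, and $w=n-k+2r=2m-2a$. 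The paper therefore simply invokes Theorem~\ref{minimal-thm} at this point and is done, exactly as in the proof of Theorem~\ref{cyclotomic.main.thm} that you cite as your model but whose closing line (``Now the result follows from Theorem~\ref{minimal-thm}'') you seem to have overlooked. Your proposed detours through Proposition~\ref{prop1} or an explicit $(-1)^{?}$ obstruction are unnecessary here.
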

\begin{proof}
The genus of the curve $\mathcal{X}_{f,k}$ is 
$g=\frac{q-1}{2}q^{\frac{n}{2}-\frac{k}{2}+r}$.
For $w$ the $\mathbb{F}_q$-dimension of the radical $W$ associated to the quadratic form $Q(x)=Tr(xS(x))$ we have: $\mathcal{X}$ is  minimal or maximal over $\mathbb{F}_{q^n}$ if and only if $w=n -k+2r$.
We have 
\begin{align*}
W&=\left\{ x \in \mathbb{F}_{q^n} : \sum_{j=0}^{\frac{n}{k}-1} G(x)^{q^{jr}}=0  \right\}\\&=
\left\{ x \in \mathbb{F}_{q^n} : \sum_{j=0}^{\frac{n}{k}-1} \left(a_rx^{q^r}+ \sum_{i=1}^r (a_{r-i}x^{q^{r-i}}+ a_{r+i}x^{q^{r+i}}) \right)^{q^{jk}}=0  \right\}.
\end{align*}
Therefore the corresponding associated polynomial to $\sum_{j=0}^{\frac{n}{k}-1} \left(a_rx^{q^r}+ \sum_{i=1}^r (x^{q^{r-i}}+ x^{q^{r+i}}) \right)^{q^{jk}}$
is \begin{equation*}
f(x)(1+x^{k}+x^{2k}+\cdots+x^{n-k})
\end{equation*}
and  $\deg\left((\gcd\left(f(x)(1+x^{k}+x^{2k}+\cdots+x^{n-k}),x^{n}-1\right)\right)=n-k+2r=w$. This shows that the curve $\mathbb{X}_{f,k}$ is either maximal of minimal over $\mathbb{F}_{q^n}$. Since the highest and the lowest powers in $S(x)$ are $q^{\frac{n}{2}-\frac{k}{2}+k}$ and $q^{\frac{k}{2}-k}$, by Theorem \ref{minimal-thm} we conclude that $\mathcal{X}_{f,k}$ is minimal.
\end{proof}

Now we construct a family of curves over $\mathbb F_{q^n}$ that are either maximal or minimal over $\mathbb F_{q^n}$ and. We omit the proof since it is very similar to the proof of Theorem \ref{Th:General2}.
\begin{thm}\label{Th:General}
Let $4\leq n=(2s+1)k$ be even. Let 
\begin{eqnarray*}
G(x)&=&\frac{a_r}2 x+\sum_{i=1}^{r} a_{r+i} x^{q^{i}},\\
\widetilde G(x)&=&\sum_{i=1}^{r} a_{r-i} x^{q^{k-i}} +a_r x^{q^k}+\sum_{i=1}^{r} a_{r+i} x^{q^{k+i}}.\\
\end{eqnarray*}
Then the curve $\mathcal{Y}_{f,k}$  of affine equation $y^q-y=x\left(\sum_{j=0}^{\frac{n-k}{2k}-1} \left(\widetilde G(x)\right)^{q^{k j}}\right)+xG(x)$  is either maximal or minimal.
\end{thm}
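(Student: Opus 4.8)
The plan is to mirror the proof of Theorem~\ref{Th:General2} almost verbatim, adapting only the bookkeeping of exponents to the present geometry. First I would set up the linearized polynomial $S(x)$ coming from the affine equation: writing out $y^q-y=xS(x)$ we have $S(x)=\sum_{j=0}^{\frac{n-k}{2k}-1}\widetilde G(x)^{q^{kj}}+G(x)$, and I would record that the highest power of $q$ appearing in $S(x)$ is $q^{\,(s-1)k+k+r}=q^{\,sk+r}$ (coming from the $j=\frac{n-k}{2k}-1$ term of the $\widetilde G$ sum) and the lowest is $q^{0}$ (the constant-linear term $\frac{a_r}{2}x$ in $G$). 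Since $n=(2s+1)k$, the relevant genus is $g(\mathcal{X})=\frac{q-1}{2}q^{M}$ where $M=sk+r$ is the top exponent, so that $2M=2sk+2r=n-k+2r$.

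Next I would compute the radical $W$ via Lemma~\ref{W}. Exactly as in Theorem~\ref{Th:General2}, the associated polynomial attached to the defining linearized form, after reducing modulo $x^n-1$, turns out to be $f(x)\cdot(1+x^k+x^{2k}+\cdots+x^{n-k})$; here the half-shift encoded by $G$ and $\widetilde G$ is precisely what makes the symmetry hypothesis $a_{r-i}=a_{r+i}$ produce this clean product. Then $w=\dim_{\mathbb{F}_q}W=\deg\gcd\!\left(f(x)(1+x^k+\cdots+x^{n-k}),\,x^n-1\right)$. Since $f(x)\mid x^k-1$ and $x^k-1\mid x^n-1$, while $1+x^k+\cdots+x^{n-k}=(x^n-1)/(x^k-1)$, the gcd is $f(x)\cdot\frac{x^n-1}{x^k-1}$ up to the overlap, and a short degree count gives $w=n-k+2r=2M$. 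By the criterion recalled after Lemma~\ref{W} (the curve is maximal or minimal over $\mathbb{F}_{q^n}$ precisely when $w=2m$ with $m$ the degree parameter of $S$, here $m=M$), it follows that $\mathcal{Y}_{f,k}$ is maximal or minimal over $\mathbb{F}_{q^n}$. That is all the statement claims, so this already completes the argument.

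The one place requiring genuine care — and the step I expect to be the main obstacle — is verifying that the associated polynomial really collapses to $f(x)(1+x^k+\cdots+x^{n-k})$ rather than something with extra cross terms. In the $G$, $\widetilde G$ splitting the indices run over a half-period of length $k$ offset by the factor-of-two term, and one must check that when Lemma~\ref{W}'s substitution $s_i\mapsto s_{m-i}^{q^i}$ type symmetrization is applied, the contributions from $xG(x)$ and from the $\widetilde G$-sum fit together with no gaps or overlaps modulo $x^n-1$; the condition $a_{r-i}=a_{r+i}$ and the coefficient $\frac{a_r}{2}$ on the linear term of $G$ are exactly tuned for this. Once that identification is in hand, the degree-of-gcd computation is routine, and since unlike Theorem~\ref{Th:General2} we do not need the extremal powers of $S(x)$ to force minimality (here either outcome is allowed), we need not invoke Theorem~\ref{minimal-thm} at all. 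Hence, as the authors note, the proof is a direct transcription of the proof of Theorem~\ref{Th:General2} with the minimality-forcing final step deleted.
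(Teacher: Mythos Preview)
Your proposal is correct and is precisely the approach the paper intends: the authors explicitly omit the proof, stating it is ``very similar to the proof of Theorem~\ref{Th:General2}.'' Your outline reproduces that argument---compute the associated ordinary polynomial $f(x)(1+x^k+\cdots+x^{n-k})$, take its $\gcd$ with $x^n-1$ to obtain $w=n-k+2r=2(sk+r)=2m$, and conclude maximal-or-minimal---and you correctly observe that the final appeal to Theorem~\ref{minimal-thm} is dropped because the lowest exponent in $S(x)$ is now $q^0$, so only the weaker disjunctive conclusion is available.
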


Finally, we give some examples of polynomials $f(x)$ satisfying the properties in \eqref{Requests_f}.
\begin{prop}
Let $r, s, k \geq 1$ be integers. The following polynomials $f(x)$ satisfy \eqref{Requests_f} in the following cases. 
\begin{enumerate}
\item [i)] $f(x)=\sum_{i=0}^{2r}x^i$ where $2r+1 \mid k$.
\item [ii)] $f(x)=\sum_{i=0}^{2r}(-1)^ix^i$ where $2(2r+1) \mid k.$
\item [iii)] $f(x)=\sum_{i=0}^{2r/s}x^{is}$ where $s\mid r, \, s(2r+1) \mid k.$
\item [iv)] $f(x)=\sum_{i=0}^{2r/s}(-1)^ix^{is}$ where $s\mid r$ and $ 2s(2r+1) \mid k.$
\item [v)] $f(x)=x^{2r}+\left(\sum_{i=2}^{2r-2}x^{i}\right)+1$  where $s=\left\{\begin{array}{ll} 6, & r\equiv0,1 \pmod{3}\\ 2, &r\equiv 2\pmod{3}\end{array}\right.$ and $  s(2r-1)  \mid k.$
\item [vi)] $f(x)=x^{2r}+\left(\sum_{i=2}^{2r-2}(-1)^ix^{i}\right)+1$ where $s=\left\{\begin{array}{ll} 6, & r\equiv0,1 \pmod{3}\\ 2, &r\equiv 2\pmod{3}\end{array}\right. $ and $ s(2r-1)  \mid k.$ 
\end{enumerate}
\end{prop}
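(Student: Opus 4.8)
The statement consists of six independent verifications; for each one the hypotheses of \eqref{Requests_f} demand three things: that $f\in\F_q[x]$, that the palindrome relation $a_{r-i}=a_{r+i}$ holds for $1\le i\le r$, and that $f(x)\mid x^{k}-1$. The plan is to reduce all of this, case by case, to an explicit factorization of $f(x)$ into binomials $x^{m}-1$, $x^{m}+1$ and cyclotomic polynomials $\Phi_{d}(x)$, and then to read each of the three properties off that factorization.

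Two of the three come essentially for free once such a factorization is in hand. Membership in $\F_q[x]$ is immediate, as the coefficients of $f$ lie in $\{0,\pm1\}\subseteq\F_p$ in all six cases. For the palindrome relation, recall that a polynomial of degree $2r$ with nonzero constant term satisfies $a_{r-i}=a_{r+i}$ for all $i$ exactly when it is self-reciprocal, that is $x^{2r}f(1/x)=f(x)$, and that self-reciprocity is preserved under products and under exact division. Each $x^{m}-1$ and $x^{m}+1$ is self-reciprocal up to sign, and each $\Phi_{d}$ with $d>1$ is self-reciprocal by Lemma \ref{symmetry}; since $f(0)=1$ in every case (so $\deg f=2r$ exactly, with leading coefficient $1$), any factorization of $f$ into such pieces gives the palindrome relation at once.

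The factorizations themselves are short computations. In (i)--(iv) one sums a geometric series --- in (iii),(iv) the hypothesis $s\mid r$ is what makes the number of summands equal to $2r/s+1$, so the series closes cleanly over $x^{s}$ --- while in (v),(vi), assuming $r\ge2$ so the middle sum is nonempty, one multiplies $f(x)$ by $x-1$, respectively by $x+1$, and factors the result as $(x^{2r-1}-1)(x^{2}-x+1)$, respectively $(x^{2r-1}+1)(x^{2}+x+1)$. This yields
\begin{align*}
f(x) &= \frac{x^{2r+1}-1}{x-1}\ \text{in (i)}, \qquad
f(x) = \frac{x^{2r+1}+1}{x+1}\ \text{in (ii)}, \\
f(x) &= \frac{x^{2r+s}-1}{x^{s}-1}\ \text{in (iii)}, \qquad
f(x) = \frac{x^{2r+s}+1}{x^{s}+1}\ \text{in (iv)}, \\
f(x) &= \Big({\textstyle\sum_{i=0}^{2r-2}x^{i}}\Big)\,\Phi_{6}(x)\ \text{in (v)}, \qquad
f(x) = \Big({\textstyle\sum_{i=0}^{2r-2}(-1)^{i}x^{i}}\Big)\,\Phi_{3}(x)\ \text{in (vi)},
\end{align*}
where $\Phi_{3}(x)=x^{2}+x+1$ and $\Phi_{6}(x)=x^{2}-x+1$.

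It remains to check $f(x)\mid x^{k}-1$; once $f$ is a product of cyclotomic polynomials this is equivalent to requiring that the multiplicative order of each root of $f$ divides $k$. From the factorizations these orders divide $2r+1$ in (i), $2(2r+1)$ in (ii), $2r+s$ in (iii), $2(2r+s)$ in (iv); in (v) they are the divisors of $2r-1$ together with $6$, and in (vi) the divisors of $2(2r-1)$ together with $3$. So the divisibility reduces in each case to checking that the arithmetic condition imposed on $k$ forces the corresponding least common multiple to divide $k$. I expect the only genuinely delicate point to be this last step in (v) and (vi): one evaluates $\operatorname{lcm}(2r-1,6)$ and $\operatorname{lcm}(2(2r-1),3)$ --- using that $2r-1$ is odd and splitting on whether $3\mid 2r-1$ --- and observes that they equal $s(2r-1)$ for precisely the prescribed piecewise value $s\in\{2,6\}$, which is exactly what dictates the case split $r\equiv0,1\pmod3$ versus $r\equiv2\pmod3$ in the statement. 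Everything else is mechanical once the displayed factorizations are available.
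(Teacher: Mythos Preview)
Your plan coincides with the paper's for (i)--(iv): the paper says only that these ``follow immediately from the factorization of $x^k-1$'', which is exactly your geometric-series reduction. For (v)--(vi) both you and the paper start from the same factorizations $f=(\sum_{i=0}^{2r-2}x^i)\,\Phi_6$ and $f=(\sum_{i=0}^{2r-2}(-1)^ix^i)\,\Phi_3$, but then diverge: the paper writes out explicit polynomial factorizations of $x^{6(2r-1)}-1$ and $x^{2(2r-1)}-1$ in each residue class $r\bmod 3$ and locates $f$ as a visible factor, whereas you argue via orders of roots and compute $\operatorname{lcm}(2r-1,6)$ and $\operatorname{lcm}(2(2r-1),3)$. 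Your route is a bit more conceptual and explains \emph{why} the case split on $r\bmod 3$ appears; the paper's is purely computational. Both are correct.

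One warning: you call the final check in (iii)--(iv) ``mechanical'', but if you actually carry it out you will find that $f=(x^{2r+s}-1)/(x^{s}-1)$ divides $x^k-1$ exactly when $2r+s\mid k$, and the printed hypothesis $s(2r+1)\mid k$ does not imply this (take $r=s=2$: then $10\mid k$ does not force $6\mid k$). This is a typo in the stated condition --- the intended bound is presumably $s(2r/s+1)=2r+s$ --- rather than a defect in your method, but it is worth flagging since the paper gives no details for these two cases.
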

\begin{proof}
The first four statements follows immediately from the factorization of $x^k-1$. The last two itens are proved as folllows.
\begin{enumerate}
\item [v)] We have that 
$f(x)=x^{2r}+\left(\sum_{i=2}^{2r-2}x^{i}\right)+1=(1+x+x^2+\cdots +x^{2r-2})(x^2-x+1)$. 
Suppose  $r\equiv 0,1 \pmod 3$, then $6(2r-1)\mid k$. Since $x^{6(2r-1)}-1$ divides $x^{k}-1$ it is enough to show that $f(x) \mid x^{6(2r-1)}-1$. We have that 
\begin{eqnarray*}
x^{6(2r-1)}-1&=&(x^{2r-1}-1)(1+x^{2r-1}+x^{2(2r-1)}+x^{3(2r-1)}+x^{4(2r-1)}+x^{5(2r-1)})\\
&=&(x-1)(1+x+x^2+\cdots +x^{2r-2})(1+x^{2r-1}+x^{2(2r-1)})(1+x^{3(2r-1)})\\
&=&(x-1)(1+x+x^2+\cdots +x^{2r-2})(1+x^{2r-1}+x^{2(2r-1)})\cdot\\
&& \qquad\qquad \qquad \cdot(1+x^3)(1-x^3+x^6-\cdots+x^{3(2r-1)-3})
\end{eqnarray*}
and $f(x)\mid x^{6(2r-1)}-1$. 
Suppose now $r\equiv 2 \pmod 3$ then $3\mid 2r-1$, $2(2r-1) \mid k$. Since $x^{2(2r-1)}-1$ divides $x^{k}-1$ it is enough to show that $f(x) \mid x^{2(2r-1)}-1$. We have that
\begin{eqnarray*}
x^{2(2r-1)}-1&=&(x^{2r-1}-1)(x^{2r-1}+1)\\
&=&(x-1)(1+x+\cdots +x^{2r-2})(1+x^3)(1-x^3+\cdots+x^{2r-1-3})
\end{eqnarray*}
and $f(x)\mid x^{2(2r-1)}-1$.
\item [(vi)] We have that 
$f(x)=x^{2r}+\left(\sum_{i=2}^{2r-2}(-1)^ix^{i}\right)+1=(1-x+x^2-\cdots +x^{2r-2})(x^2+x+1)$.
Suppose $r\equiv 0,1 \pmod 3$ and therefore $6(2r-1)\mid k$. Since $x^{6(2r-1)}-1$ divides $x^{k}-1$ it is enough to show that $f(x) \mid x^{6(2r-1)}-1$. We can write
\begin{eqnarray*}
x^{6(2r-1)}-1&=&(x^{3(2r-1)}-1)(x^{3(2r-1)}+1)\\
&=&(x^{3(2r-1)}-1)(x^{2r-1}+1)(x^{2(2r-1)}-x^{2r-1}+1)\\
&=&(x^3-1)(1+x^3+\cdots+x^{3(2r-1)-3})(x+1)\cdot\\
&&\qquad \cdot(1-x+x^2-\cdots +x^{2r-2})(x^{2(2r-1)}-x^{2r-1}+1)
\end{eqnarray*}
and $f(x)\mid x^{6(2r-1)}-1$. 
Suppose now $r\equiv 2 \pmod 3$ then $3\mid 2r-1$, $2(2r-1) \mid k$. Since $x^{2(2r-1)}-1$ divides $x^{k}-1$ it is enough to show that $f(x) \mid x^{2(2r-1)}-1$. We have that
\begin{eqnarray*}
x^{2(2r-1)}-1&=&(x^{2r-1}-1)(x^{2r-1}+1)\\
&=&(x^3-1)(1+x^3+\cdots+x^{2r-1-3})(x+1)(1-x+\cdots +x^{2r-2})
\end{eqnarray*}
and $f(x)\mid x^{2(2r-1)}-1$.
\end{enumerate}

\end{proof}

\end{document}